\documentclass[11pt,reqno,a4paper]{amsart}
\usepackage{amscd}
\usepackage{amsmath}
\usepackage{amsfonts}
\usepackage{amssymb}
\usepackage{color}
\newtheorem{theorem}{Theorem}[section]
\theoremstyle{plain}

\newtheorem{cor}[theorem]{Corollary}

\newtheorem{lemma}[theorem]{Lemma}

\numberwithin{equation}{section}
\newcommand{\N}{\mathbb{N}}
\newcommand{\R}{\mathbb{R}}
\newcommand{\Z}{\mathbb{Z}}

\definecolor{red}{rgb}{1,0,.2}

\allowdisplaybreaks

\begin{document}
\title[IFS with random maps in $L^2$]
{The absolute continuity of the invariant measure of random
iterated function systems with overlaps}

\author{Bal\'azs B\'ar\'any}
\address{Bal\'azs B\'ar\'any, IM PAN, \'Sniadeckich 8, P.O. Box 21, 00-956 Warszawa 10, Poland } \email{balubsheep@gmail.com}

\author{Tomas Persson}
\address{Tomas Persson, IM PAN, \'Sniadeckich 8, P.O. Box 21, 00-956 Warszawa 10, Poland }
\email{tomasp@impan.pl}

 \thanks{
\\ \indent
{\em Key words and phrases.} Iterated function systems, Absolute continuity, random perturbation\\
\indent Research of B\'ar\'any was supported by the EU FP6
Research Training Network CODY}

\begin{abstract}
  We consider iterated function systems on the interval with random perturbation. Let $Y_\varepsilon$ be uniformly distributed in $[1- \varepsilon, 1 + \varepsilon]$ and let $f_i \in C^{1+\alpha}$ be contractions with fixpoints $a_i$. We consider the iterated function system $\{ Y_\varepsilon f_i + a_i (1 - Y_\varepsilon) \}_{i=1}^n$, were each of the maps are chosen with probability $p_i$. It is shown that the invariant density is in $L^2$ and the $L^2$-norm does not grow faster than $1/\sqrt{\varepsilon}$, as $\varepsilon$ vanishes.

  The proof relies on defining a piecewise hyperbolic dynamical system on the cube, with an SRB-measure with the property that its projection is the density of the iterated function system.
\end{abstract}
\date{\today}

\maketitle

\thispagestyle{empty}

\section{Introduction and Statements of Results}
Let $\left\{f_1,\ldots,f_l\right\}$ be an iterated function system
(IFS) on the real line, where the maps are applied according to the
probabilities $(p_1,\ldots,p_l)$, with the choice of the map random
and independent at each step. We assume that for each $i$, $f_i$
maps $[-1,1)$ into itself and $f_i\in C^{1+\alpha}([-1,1))$.
Let $\nu$ be the invariant measure of our IFS, namely,
\begin{equation}\label{e1}
\nu=\sum_{i=1}^lp_i\nu\circ f_i^{-1}.
\end{equation}
Let $\mu=(p_1,\ldots,p_l)^{\N}$ be a Bernoulli measure on the space
$\sum=\left\{1,\ldots,l\right\}^{\N}$. Let
$h(\underline{p})=-\sum_{i=1}^lp_i\log p_i$ be the entropy of the
underlying Bernoulli process $\mu$. It was proved in \cite{NSB}
for non-linear contracting on average IFSs (and later extended in
\cite{FST}) that
\[
  \dim_\mathrm{H} (\nu)\leq\frac{h}{|\chi|},
\]
where $\dim_\mathrm{H} (\nu)$ is the Hausdorff dimension of the measure
$\nu$ and $\chi$ is the Lyapunov exponent of the IFS associated to
the Bernoulli measure $\mu$.

One can expect that, at least ''typically'', the measure $\nu$ is
absolutely continuous when $h/|\chi|>1$. Essentially the only known
approach to this is transversality. For example, in linear case
with uniform contracting ratios see \cite{peschl},\cite{peso2}. In the
linear case for non-uniform contracting ratios, see \cite{Neu},
\cite{ngawang}. In the non-linear case, see for example \cite{SSU2},
\cite{BPS}. We note that there is an other direction in the study of
IFSs with overlaps, which is concerned with concrete, but
not-typical systems, often of arithmetic nature, for which there
is a dimension drop, see, for example \cite{LNR}.

Trough this paper we are interested in to study absolute
continuity with density in $L^2$. We study a modification of the
problem, namely we consider a random perturbation of the
functions. The linear case was studied by Peres, Simon and Solomyak in \cite{PSS}. They
proved absolute continuity for random linear IFS, with non-uniform
contracting ratios and also $L^2$ and continuous density in the
uniform case. We would like to extend this result by proving $L^2$
density with non-uniform contracting ratios and in non-linear
case.

We consider two cases. First let us suppose that for each
$i\in\left\{1,\ldots,l\right\}$, $f_i$ maps $[-1,1)$ into itself,
$f_i\in C^{1+\alpha}([-1,1))$ and
\begin{equation}\label{e3}
 0 < \lambda_{i,\min} \leq | f_i'(x) | \leq \lambda_{i,\max} < 1
\end{equation}
for every $x\in[-1,1)$. Moreover let us suppose that for every $i$
the fix point of $f_i$ is $a_i\in[-1,1]$, and
\begin{equation}\label{e4}
  i \neq j \ \Rightarrow \ a_i \neq a_j.
\end{equation}

Let $Y_{\varepsilon}$ be uniformly distributed on
$[1-\varepsilon,1+\varepsilon]$. Let us denote the probability
measure of $Y_{\varepsilon}$ by $\eta_{\varepsilon}$. Let
\begin{equation}
f_{i,
Y_{\varepsilon}}(x)=Y_{\varepsilon}f_i(x)+a_i(1-Y_{\varepsilon})
\end{equation}
for every $i\in\left\{1,\ldots,l\right\}$. The iterated maps are
applied randomly according to the stationary measure $\mu$, with
the sequence of independent and identically distributed errors
$y_1,y_2,\ldots$, distributed as $Y_{\varepsilon}$, independent of
the choice of the function. The Lyapunov exponent of the IFS is
defined by
\[
  \chi(\mu,\eta_{\varepsilon})
  = \mathbb{E}(\log(Y_{\varepsilon}f'))
\]
and
\[
  \chi(\mu,\eta_{\varepsilon})
  <\sum_{i=1}^lp_i\log((1+\varepsilon)\lambda_{i,\max}) < 0,
\]
for sufficiently small $\varepsilon>0$. Let $Z_{\varepsilon}$ be the
following random variable
\begin{equation}\label{e2}
Z_{\varepsilon}:=\lim_{n\rightarrow\infty}f_{i_1,y_{1,\varepsilon}}\circ
f_{i_2,y_{2,\varepsilon}}\circ\cdots\circ
f_{i_n,y_{n,\varepsilon}}(0),
\end{equation}
where the numbers $i_k$ are i.i.d., with the distribution $\mu$ on
$\left\{1,\ldots,l\right\}$, and $y_k$ are pairwise independent
with distribution of $Y_{\varepsilon}$ and also independent of the
choice of $i_k$. Let $\nu_{\varepsilon}$ be the distribution of
$Z_{\varepsilon}$.


One can easily prove the following theorem.
\begin{theorem}
The measure $\nu_{\varepsilon}$ converges weakly to the measure
$\nu$ as $\varepsilon\rightarrow0$, see (\ref{e1}).
\end{theorem}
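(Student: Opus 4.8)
The plan is to exhibit a coupling between the random IFS with perturbation and the unperturbed one, and to show that the coupled trajectories stay uniformly close. Concretely, fix the same random symbol sequence $(i_1, i_2, \ldots)$ distributed according to $\mu$ and the same error sequence $(y_1, y_2, \ldots)$ distributed as $Y_\varepsilon$, and compare the two "backward" compositions
\[
  W_n^\varepsilon = f_{i_1, y_1} \circ \cdots \circ f_{i_n, y_n}(0),
  \qquad
  W_n = f_{i_1} \circ \cdots \circ f_{i_n}(0).
\]
First I would recall that the limit $Z_\varepsilon = \lim_n W_n^\varepsilon$ exists almost surely, which is the standard contraction argument: since $f_{i,y}(x) = y f_i(x) + a_i(1-y)$ has derivative $y f_i'(x)$, and $\chi(\mu, \eta_\varepsilon) < 0$ for small $\varepsilon$, the sequence $W_n^\varepsilon$ is almost surely Cauchy (the increments $|W_{n+1}^\varepsilon - W_n^\varepsilon|$ are dominated geometrically in $n$ by the product of the derivatives along the orbit, using the law of large numbers for $\frac1n \sum \log(y_k \lambda_{i_k,\max})$). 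The same argument gives that $W_n \to Z$ with $\mathcal{L}(Z) = \nu$.

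Next I would estimate $|W_n^\varepsilon - W_n|$. Write the difference telescopically by swapping one perturbed map at a time: compare $f_{i_1,y_1} \circ \cdots \circ f_{i_k, y_k} \circ f_{i_{k+1}} \circ \cdots \circ f_{i_n}(0)$ with the same expression where the $(k+1)$-st map is also perturbed. The effect of replacing $f_{i}$ by $f_{i,y}$ at one point is bounded by $\sup_x |f_{i,y}(x) - f_i(x)| = |y - 1|\, \sup_x |f_i(x) - a_i| \leq 2\varepsilon$, and this perturbation is then contracted by the Lipschitz constants (at most $(1+\varepsilon)\lambda_{i,\max} < 1$ for small $\varepsilon$) of the maps $f_{i_1,y_1}, \ldots, f_{i_k,y_k}$ applied afterwards. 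Summing the geometric series over $k$ gives a bound $|W_n^\varepsilon - W_n| \leq C\varepsilon$ uniformly in $n$ and in the random sequences, hence $|Z_\varepsilon - Z| \leq C\varepsilon$ almost surely (on the common probability space). This implies convergence in distribution: for any bounded Lipschitz $\varphi$, $|\mathbb{E}\varphi(Z_\varepsilon) - \mathbb{E}\varphi(Z)| \leq \|\varphi\|_{\mathrm{Lip}} \, \mathbb{E}|Z_\varepsilon - Z| \leq C \|\varphi\|_{\mathrm{Lip}}\, \varepsilon \to 0$, and by the portmanteau theorem $\nu_\varepsilon \to \nu$ weakly.

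The only mild subtlety — and the reason the theorem is "easy" rather than immediate — is making the contraction estimate uniform: the bound $(1+\varepsilon)\lambda_{i,\max} < 1$ must be available with a constant $<1$ independent of $\varepsilon$, which holds for $\varepsilon$ small since $\lambda_{i,\max} < 1$ for finitely many $i$; and one must check that the maps $f_{i,y}$ still send a fixed compact interval containing $[-1,1)$ into itself so that $0$ and all iterates lie in a common domain where the derivative bounds apply. Both are routine given \eqref{e3} and $a_i \in [-1,1]$. I do not expect any genuine obstacle here; the work is entirely in setting up the coupling and bookkeeping the telescoping sum.
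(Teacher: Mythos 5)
The paper states this theorem without proof, asserting only that ``one can easily prove'' it, so there is no written argument in the text to compare against. Your coupling-and-telescoping proof is correct and is the natural argument: fixing the symbol sequence $(i_k)$ and error sequence $(y_k)$, a single-map swap changes the backward composition by at most $|y_k-1|\sup_x|f_{i_k}(x)-a_{i_k}|\le 2\varepsilon$, and this increment is then geometrically contracted by the outer maps, giving $|W_n^\varepsilon - W_n|\le C\varepsilon$ uniformly in $n$ and in the randomness, hence $|Z_\varepsilon - Z|\le C\varepsilon$ almost surely; testing against bounded Lipschitz functions then gives weak convergence. One small streamlining: since (\ref{e3}) guarantees $\lambda_{i,\max}<1$ for all $i$, the maps are \emph{uniform} contractions, so a.s.\ existence of $Z_\varepsilon$ and $Z$ is an immediate Cauchy/geometric-series argument --- there is no need to invoke a law of large numbers as one would in a genuinely contracting-on-average setting. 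The domain caveat you raise is real but, as you say, routine: for small $\varepsilon$, $(1+\varepsilon)\lambda_{\max,\max}<1$ together with $a_i\in[-1,1]$ keeps every iterate in a fixed compact interval $[-1-\delta,1+\delta]$ to which the $C^{1+\alpha}$ maps $f_i$ extend. Your argument in fact delivers the slightly stronger conclusion that $\nu_\varepsilon\to\nu$ in Wasserstein-$1$ distance at rate $O(\varepsilon)$.
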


\begin{theorem}\label{t3}
Let $\nu_{\varepsilon}$ be the distribution of the limit
(\ref{e2}). We assume that (\ref{e3}), (\ref{e4}) hold, and
\begin{equation} \label{conditionoft3}
\sum_{i=1}^lp_i^2\frac{\lambda_{i,\max}}{\lambda_{i,\min}^2}<1.
\end{equation}
Then for every sufficiently small $\varepsilon>0$,
we have that $\nu_{\varepsilon}\ll\mathcal{L}_1$ with density in
$L^2$. For the $L^2$-norm of the density we have the following
estimate
\[
  \| \nu_\varepsilon \|_2 \leq \frac{C_{\varepsilon}' }{\sqrt{\varepsilon}},
\]
where
\[
  C_{\varepsilon}' = \sqrt{ \frac{32}{\left( 1 - \sum_{i=1}^l p_i^2
  \frac{(1 + \varepsilon) \lambda_{i,\max}}{((1 - \varepsilon)
  \lambda_{i,\min} )^2} \right) C_{\varepsilon}''} }
\]
and
\[
  C_{\varepsilon}'' = \min_{i \neq j}
  \left\{\frac{|a_i - a_j| + \varepsilon(-|a_i + a_j| - 2)}{1 - \varepsilon^2} \right\}.
\]
\end{theorem}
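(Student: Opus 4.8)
The plan is to realise $\nu_\varepsilon$ as the projection of an absolutely continuous invariant (SRB) measure of a piecewise hyperbolic map and then to bound the $L^2$-norm of its density by a self-improving estimate for the associated transfer operator. The inverse maps $g_{i,y}:=f_{i,y}^{-1}$, namely $g_{i,y}(t)=f_i^{-1}\!\bigl(a_i+(t-a_i)/y\bigr)$, are expanding, and one builds a map $\Phi$ on the cube $[-1,1]\times[1-\varepsilon,1+\varepsilon]$ — more precisely a finite disjoint union of such boxes indexed by the symbols, the extra coordinates re-randomising the symbol and the perturbation at each step — which acts by such an inverse branch in the expanding direction and contracts in the complementary coordinate(s); its SRB measure $m$ then satisfies $\pi_\ast m=\nu_\varepsilon$, where $\pi$ is the projection to the $[-1,1]$-coordinate. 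Absolute continuity of $\nu_\varepsilon$ is in fact elementary: conditionally on the last symbol $i_1$ and the last point $W$, the variable $Z_\varepsilon=f_{i_1,Y_\varepsilon}(W)$ is uniform on an interval, so $\nu_\varepsilon$ is a mixture of uniform laws; the substance of the theorem is the $L^2$ bound, for which these uniform densities are $|f_{i_1}(W)-a_{i_1}|^{-1}$ and may be large. Writing $\rho$ for a density of $\nu_\varepsilon$, the invariance of the unstable conditionals of $m$ translates into the fixed-point equation $\rho=\sum_{i=1}^{l}p_i\,T_i\rho$, where a change of variable $s=g_{i,y}(t)$ in the $y$-average puts $T_i$ in the clean form
\[
  T_ih(t)=\frac{1}{2\varepsilon}\int_{J_i(t)}\frac{h(s)}{|f_i(s)-a_i|}\,ds,\qquad
  J_i(t)=\bigl\{\,s\in[-1,1):\ |t-f_i(s)|\le\varepsilon\,|f_i(s)-a_i|\,\bigr\}.
\]

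Since $\|\rho\|_1=1$ and $\rho=\sum_i p_iT_i\rho$,
\[
  \|\rho\|_2^2=\sum_{i=1}^l p_i^2\,\|T_i\rho\|_2^2+\sum_{i\neq j}p_ip_j\,\langle T_i\rho,T_j\rho\rangle ,
\]
and I would estimate the two sums separately. For the diagonal terms, $T_i$ is an averaging operator over $J_i(t)$ with weight $|f_i(s)-a_i|^{-1}$; Cauchy--Schwarz in $s$ followed by Fubini — using that $f_i$ carries $J_i(t)$ onto an interval of length $\tfrac{2\varepsilon|t-a_i|}{1-\varepsilon^2}$, the bounds $|f_i'|\in[\lambda_{i,\min},\lambda_{i,\max}]$ from \eqref{e3}, and $|f_i(s)-a_i|=|f_i(s)-f_i(a_i)|$ — gives $\|T_ih\|_2^2\le c_i(\varepsilon)\,\|h\|_2^2$, where bookkeeping of the $\varepsilon$- and derivative-dependence produces the factor $c_i(\varepsilon)=\frac{(1+\varepsilon)\lambda_{i,\max}}{((1-\varepsilon)\lambda_{i,\min})^2}$ appearing in $C_\varepsilon'$. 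Under \eqref{conditionoft3}, for $\varepsilon$ small the coefficient $\theta_\varepsilon:=\sum_i p_i^2 c_i(\varepsilon)$ is then $<1$.

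The off-diagonal terms are the heart of the matter, and it is here that \eqref{e4} (separation of the fixed points) enters. For $i\neq j$,
\[
  \langle T_i\rho,T_j\rho\rangle=\frac{1}{4\varepsilon^2}\iint\frac{\rho(s)\,\rho(u)}{|f_i(s)-a_i|\,|f_j(u)-a_j|}\,L_{ij}(s,u)\,ds\,du ,
\]
where $L_{ij}(s,u)$ is the length of the intersection of the interval of radius $\varepsilon|f_i(s)-a_i|$ around $f_i(s)$ with that of radius $\varepsilon|f_j(u)-a_j|$ around $f_j(u)$; thus $L_{ij}\le 2\varepsilon\min\{|f_i(s)-a_i|,|f_j(u)-a_j|\}$, and $L_{ij}=0$ unless $|f_i(s)-f_j(u)|\le\varepsilon(|f_i(s)-a_i|+|f_j(u)-a_j|)$. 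On the support of $L_{ij}$ one has $f_i(s)\approx f_j(u)=:w$, so $|w-a_i|+|w-a_j|\ge|a_i-a_j|$ forces $\max\{|f_i(s)-a_i|,|f_j(u)-a_j|\}$ to be bounded below by $\tfrac12|a_i-a_j|$ up to an $\varepsilon$-error; the precise version of this lower bound, with the $(1-\varepsilon^2)^{-1}$ coming from the interval widths, is exactly the quantity $C_\varepsilon''$ (after minimising over pairs $i\neq j$). Since $\min\{\cdot,\cdot\}/(\text{product})=1/\max\{\cdot,\cdot\}$, this bounds the integrand and yields $\langle T_i\rho,T_j\rho\rangle\le\frac{c}{\varepsilon C_\varepsilon''}\,\|\rho\|_1^2=\frac{c}{\varepsilon C_\varepsilon''}$. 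Summing and using $\sum_{i\neq j}p_ip_j\le 1$ gives $\|\rho\|_2^2\le\theta_\varepsilon\,\|\rho\|_2^2+\frac{c}{\varepsilon C_\varepsilon''}$; solving (and tracking the absolute constant, which comes out as $32$ once the Cauchy--Schwarz and interval-length losses are accounted for) gives $\|\rho\|_2\le C_\varepsilon'/\sqrt\varepsilon$. The division by $1-\theta_\varepsilon$ presupposes $\rho\in L^2$, which I would obtain a priori by applying the \emph{same} inequality to $T^nh$ for a fixed probability density $h\in L^2\cap L^1$: this yields $\|T^nh\|_2^2\le\theta_\varepsilon^{\,n}\|h\|_2^2+\frac{c}{\varepsilon C_\varepsilon''(1-\theta_\varepsilon)}$, hence a uniform $L^2$ bound along a sequence converging to $\rho$ in $L^1$, so $\rho\in L^2$ with the asserted bound by weak-$\ast$ compactness.

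I expect two steps to cost real work. First, making the piecewise hyperbolic model — and hence the transfer-operator fixed-point equation for $\rho$ — rigorous despite the overlaps and the $C^{1+\alpha}$ nonlinearity (the Hölder exponent $\alpha$ is what supplies the bounded-distortion control needed to identify $J_i(t)$ and its $f_i$-image with genuine intervals of the stated lengths, and to construct the SRB measure). Second, the off-diagonal estimate — upgrading the qualitative hypothesis $a_i\neq a_j$ to the $\varepsilon$-sharp quantitative weight bound encoded in $C_\varepsilon''$, and matching all the constants so that the final bound is precisely $C_\varepsilon'/\sqrt\varepsilon$.
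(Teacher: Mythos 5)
Your approach is genuinely different from the paper's, and worth comparing carefully. The paper realises $\nu_\varepsilon$ as the one-dimensional marginal of the SRB measure $\gamma_{\varepsilon,m}$ of an explicit piecewise hyperbolic map $g_{\varepsilon,m}$ on the cube $[-1,1)^3$, where the third coordinate discretises the noise parameter into $2^m$ slices; it then runs Tsujii's bilinear form
\[
  (\rho_1,\rho_2)_r = \int_{\R} \rho_1(B_r(x))\,\rho_2(B_r(x))\,dx
\]
on the conditional measures $\gamma_z$ of $\gamma_{\varepsilon,m}$ on slices, obtaining a self-improving inequality for $J(r)=\tfrac{1}{r^2}\int(\gamma_z,\gamma_z)_r\,dz$ and passing to the limit $m\to\infty$ at the very end. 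You instead write the transfer operator $T=\sum_i p_iT_i$ for the density directly and estimate $\|\sum_i p_iT_ih\|_2^2$ in one stroke. The conceptual core is identical in both treatments — split into a diagonal part $\sum_i p_i^2\|T_ih\|_2^2$, controlled by the expansion hypothesis \eqref{conditionoft3}, and an off-diagonal part $\sum_{i\neq j}p_ip_j\langle T_ih,T_jh\rangle$, controlled by the separation of the fixed points \eqref{e4}. In the paper the off-diagonal bound is Lemma~\ref{l1}, a cone-transversality statement for $g_{\varepsilon,m}$, which when unwound says precisely that the slopes $(f_i(x_p)-a_i)/d(z_p)$ and $(f_j(x_q)-a_j)/d(z_q)$ differ by at least $C_{\varepsilon,m}$; your interval-overlap computation $L_{ij}\le 2\varepsilon\min$, combined with the observation that $\max(|f_i(s)-a_i|,|f_j(u)-a_j|)$ is bounded below on $\{L_{ij}\ne 0\}$, is the one-dimensional shadow of the same geometry.

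Your version buys two things. First, you avoid the entire SRB-on-a-cube machinery (the discretisation in $m$, the construction of $\gamma_{\varepsilon,m}$ as a weak-$\ast$ limit, the passage through $\Theta$ and $\rho^{-1}$, and the two limits $m\to\infty$, $r\to0$); the invariance $\rho=\sum_ip_iT_i\rho$ follows directly from the stationarity of $\nu_\varepsilon$ once one knows $\nu_\varepsilon\ll\mathcal L_1$, which, as you correctly observe, is elementary (a mixture of uniform laws). Second, if you actually carry out the Cauchy--Schwarz and Fubini steps you will \emph{not} reproduce the paper's constants but improve on them. For the diagonal term, choosing the weight $|f_i(s)-a_i|^{-1}$ in Cauchy--Schwarz and changing variables $v=f_i(s)$ gives
\[
  \|T_ih\|_2^2 \le \frac{1}{\lambda_{i,\min}(1-\varepsilon)}\|h\|_2^2,
\]
which is strictly smaller than $\frac{(1+\varepsilon)\lambda_{i,\max}}{((1-\varepsilon)\lambda_{i,\min})^2}$ whenever $\lambda_{i,\max}\ge\lambda_{i,\min}$; you should not try to manufacture the extra $\lambda_{i,\max}$ — it is an artifact of the paper's Jacobian change of variables when pushing the bilinear form through $g_{\varepsilon,m}^{-a}$, and your estimate would in fact establish the theorem under the weaker hypothesis $\sum_i p_i^2/\lambda_{i,\min}<1$. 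Likewise, the overall prefactor $32$ in $C_\varepsilon'$ decomposes as $4\times 8$ in the paper, with the $4$ coming from the marginal-versus-full-measure comparison $\|\nu_{\varepsilon,m}\|_2\le 2\|\gamma_{\varepsilon,m}\|_2$ and the $8$ from the Lebesgue-measure estimate of the transversal crossing set along the $z$-axis; neither loss occurs in your direct argument, so do not expect to land on exactly $32$ and $C_\varepsilon''$ — tracking your constants honestly gives a better bound, which proves the theorem a fortiori. Do note that in your scheme the quantity playing the role of $C_\varepsilon''$ is a lower bound for $\max(|f_i(s)-a_i|,|f_j(u)-a_j|)$ on the support of $L_{ij}$, which is $\ge |a_i-a_j|/(2(1+\varepsilon))$ by the triangle inequality; it is not literally $C_\varepsilon''$, but is comparable to it.

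The one point that needs genuine care is the a~priori justification that $\rho\in L^2$ before dividing by $1-\theta_\varepsilon$. Your proposal — iterate $T$ on a fixed $h\in L^1\cap L^2$, get a uniform $L^2$ bound along $T^nh$, and use the fact that $T^nh$ converges to $\rho$ in the appropriate topology together with $L^2$-weak-$\ast$ compactness — is sound, but you need to say in what sense $T^nh\to\rho$; uniqueness of the stationary density (contraction on average plus absolute continuity) gives $T^nh\to\rho$ in total variation, and the uniform $L^2$ bound then forces $\rho\in L^2$ with the asserted norm. The paper side-steps this by working with the finite quantity $J(r)$ for fixed $r>0$ and showing $\liminf_{r\to0}J(r)<\infty$, which is a built-in a~priori device; your argument requires the extra (easy) step above. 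Finally, you attribute a role to the $C^{1+\alpha}$ regularity (''bounded-distortion control''), but in fact neither your estimates nor the paper's use anything beyond the two-sided derivative bounds \eqref{e3}; distortion plays no part, and the Hölder assumption can be dropped from your version entirely.
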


We can draft an easy corollary of the theorem.

\begin{cor}\label{c1}
  Let
  $\left\{\lambda_iY_{\varepsilon}x+a_i(1-\lambda_iY_{\varepsilon})\right\}_{i=1}^l$
  be a random iterated function system. We assume that (\ref{e4})
  holds, and
  \begin{equation}
    \sum_{i=1}^l\frac{p_i^2}{\lambda_i}<1.
  \end{equation}
  Then for every sufficiently small $\varepsilon >0$, we have that
  $\nu_{\varepsilon} \ll \mathcal{L}_1$ with density in $L^2$, the $L^2$-norm of the density satisfies
\[
  \| \nu_\varepsilon \|_2 \leq \frac{C_{\varepsilon}' }{\sqrt{\varepsilon}},
\]
where
\[
  C_{\varepsilon}' = \sqrt{ \frac{32}{\left( 1 - \sum_{i=1}^l p_i^2
  \frac{1 + \varepsilon}{(1 - \varepsilon)^2
  \lambda_{i}} \right) C_{\varepsilon}''} }
\]
and
\[
  C_{\varepsilon}'' = \min_{i \neq j}
  \left\{\frac{|a_i - a_j| + \varepsilon(-|a_i + a_j| - 2)}{1 - \varepsilon^2} \right\}.
\]
\end{cor}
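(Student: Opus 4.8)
The plan is to realise the system $\{\lambda_i Y_\varepsilon x + a_i(1-\lambda_i Y_\varepsilon)\}_{i=1}^l$ as the random perturbation, in the sense of the Introduction, of a purely affine IFS, and then to invoke Theorem~\ref{t3} directly. First I would introduce, for each $i$, the affine contraction
\[
  f_i(x) = \lambda_i x + a_i(1-\lambda_i),
\]
and record the routine facts that $f_i\in C^{1+\alpha}([-1,1))$ (being affine), that its unique fixed point is $a_i$, that $f_i$ maps $[-1,1)$ into itself because $f_i(x)=\lambda_i x+(1-\lambda_i)a_i$ is a convex combination of $x\in[-1,1)$ and $a_i\in[-1,1]$ with weight $\lambda_i\in(0,1)$, and that $|f_i'|\equiv\lambda_i$, so that (\ref{e3}) holds with $\lambda_{i,\min}=\lambda_{i,\max}=\lambda_i$.

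Next I would verify the one-line identity that the perturbed maps of the Introduction attached to these $f_i$ are exactly the given ones:
\[
  Y_\varepsilon f_i(x) + a_i(1-Y_\varepsilon) = \lambda_i Y_\varepsilon x + a_i(1-\lambda_i Y_\varepsilon).
\]
Hence the random variable $Z_\varepsilon$ of (\ref{e2}) and its law $\nu_\varepsilon$ are the same for both descriptions of the system, and the distinctness hypothesis (\ref{e4}) assumed in the corollary is literally the hypothesis (\ref{e4}) required by Theorem~\ref{t3}.

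It then remains to push the hypothesis and the conclusion of Theorem~\ref{t3} through the substitution $\lambda_{i,\min}=\lambda_{i,\max}=\lambda_i$. Condition (\ref{conditionoft3}) collapses to $\sum_{i=1}^l p_i^2/\lambda_i<1$, which is exactly what the corollary assumes, so Theorem~\ref{t3} applies for all sufficiently small $\varepsilon>0$ and yields $\nu_\varepsilon\ll\mathcal{L}_1$ with density in $L^2$ and $\|\nu_\varepsilon\|_2\le C_\varepsilon'/\sqrt\varepsilon$; substituting the same equalities into $C_\varepsilon'$ turns $\frac{(1+\varepsilon)\lambda_{i,\max}}{((1-\varepsilon)\lambda_{i,\min})^2}$ into $\frac{1+\varepsilon}{(1-\varepsilon)^2\lambda_i}$ and leaves $C_\varepsilon''$ unchanged, which is the asserted bound. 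I do not expect a genuine obstacle here; the only points deserving care are the algebraic identity above — conjugating the slope of a linear map by $Y_\varepsilon$ about its fixed point $a_i$ is the same as multiplying $\lambda_i$ by $Y_\varepsilon$ — and making sure the implicit standing assumptions $a_i\in[-1,1]$ and $0<\lambda_i<1$ are in force, so that the $f_i$ really are self-contractions of $[-1,1)$ and Theorem~\ref{t3} is legitimately applicable.
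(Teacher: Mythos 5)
Your proposal is correct and is exactly the intended derivation: specialize Theorem~\ref{t3} to the affine maps $f_i(x)=\lambda_i x+a_i(1-\lambda_i)$, for which $\lambda_{i,\min}=\lambda_{i,\max}=\lambda_i$, and verify the identity $Y_\varepsilon f_i(x)+a_i(1-Y_\varepsilon)=\lambda_i Y_\varepsilon x+a_i(1-\lambda_i Y_\varepsilon)$. The paper gives no separate argument for the corollary (it is described as an ``easy corollary''), and your substitution reproduces both the hypothesis $\sum_i p_i^2/\lambda_i<1$ and the stated constants.
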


We study an other case of random perturbation, namely let
$\widetilde{\lambda}_{i,\varepsilon}$ be uniformly distributed on
$[\lambda_i-\varepsilon,\lambda_i+\varepsilon]$. Let
$\left\{\widetilde{\lambda}_{i,\varepsilon}x+a_i(1-\widetilde{\lambda}_{i,\varepsilon})\right\}_{i=1}^l$ be
our random iterated function system, where $a_i\neq a_j$  for
every $i\neq j$. Let
$\underline{\lambda}=(\lambda_1,\ldots,\lambda_l)$, and
$X_{\underline{\lambda},\varepsilon}$ be the following random
variable
\begin{equation}
  X_{\underline{\lambda}, \varepsilon} = \sum_{k = 1}^{\infty} (a_{i_k} (1 - \widetilde{\lambda}_{i_k, \varepsilon})) \prod_{j = 1}^{k - 1} \widetilde{\lambda}_{i_j, \varepsilon}
\end{equation}
where the numbers $i_k$ are i.i.d., with the distribution $\mu$ on
$\left\{1,\ldots,l\right\}$, and
$\widetilde{\lambda}_{i_k,\varepsilon}$ are pairwise independent.
Let $\nu_{\underline{\lambda},\varepsilon}$ denote the
distribution of the random variable
$X_{\underline{\lambda},\varepsilon}$. Moreover let
$\nu_{\underline{\lambda}}$ be the invariant measure of
the the iterated function system $\left\{\lambda_ix+a_i(1-\lambda_i)\right\}_{i=1}^l$ according to
$\mu$.

\begin{theorem}
  The measure $\nu_{\underline{\lambda},\varepsilon}$ converges
  weakly to the measure $\nu_{\underline{\lambda}}$ as
  $\varepsilon\rightarrow0$.
\end{theorem}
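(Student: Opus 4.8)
The plan is to realise all the random variables on a single probability space and to show that $X_{\underline{\lambda},\varepsilon}$ converges to $X_{\underline{\lambda}}$ surely, even at a deterministic rate in $\varepsilon$; weak convergence of the laws will then follow at once. First I would fix the i.i.d.\ sequence $(i_k)_{k\geq1}$ with distribution $\mu$, take auxiliary i.i.d.\ random variables $(U_k)_{k\geq1}$ uniform on $[-1,1]$ and independent of $(i_k)_{k\geq1}$, and set $\widetilde{\lambda}_{i_k,\varepsilon}:=\lambda_{i_k}+\varepsilon U_k$. For each $k$ this is uniformly distributed on $[\lambda_{i_k}-\varepsilon,\lambda_{i_k}+\varepsilon]$, the $\widetilde{\lambda}_{i_k,\varepsilon}$ are pairwise independent and independent of $(i_k)_{k\geq1}$, so the series built from them has law $\nu_{\underline{\lambda},\varepsilon}$; putting $\varepsilon=0$ recovers $X_{\underline{\lambda}}$, which by the standard series representation of the invariant measure of a contracting IFS has law $\nu_{\underline{\lambda}}$.

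The second step is a term-by-term comparison of the two series on this common space. Write $\lambda_{\max}=\max_i\lambda_i$ and restrict to $\varepsilon>0$ so small that $\lambda_{\max}+\varepsilon<1$ and $\min_i\lambda_i-\varepsilon>0$; then each factor $\widetilde{\lambda}_{i_j,\varepsilon}$ lies in $(0,1)$, both series converge absolutely, and all their partial sums stay in a fixed compact interval. Setting $b_k^{\varepsilon}=a_{i_k}(1-\widetilde{\lambda}_{i_k,\varepsilon})$, $P_k^{\varepsilon}=\prod_{j=1}^{k-1}\widetilde{\lambda}_{i_j,\varepsilon}$ and writing $b_k,P_k$ for the unperturbed analogues, I would use $|b_k^{\varepsilon}-b_k|\leq|a_{i_k}|\,\varepsilon|U_k|\leq\varepsilon$, the telescoping bound
\[
  |P_k^{\varepsilon}-P_k|\leq\sum_{m=1}^{k-1}\Big(\prod_{j<m}\widetilde{\lambda}_{i_j,\varepsilon}\Big)\,\varepsilon|U_m|\,\Big(\prod_{j>m}\lambda_{i_j}\Big)\leq(k-1)\,\varepsilon\,(\lambda_{\max}+\varepsilon)^{k-2},
\]
and the elementary bounds $|b_k|,|b_k^{\varepsilon}|\leq1$, $|P_k^{\varepsilon}|\leq(\lambda_{\max}+\varepsilon)^{k-1}$. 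Combining these through $b_k^{\varepsilon}P_k^{\varepsilon}-b_kP_k=(b_k^{\varepsilon}-b_k)P_k^{\varepsilon}+b_k(P_k^{\varepsilon}-P_k)$ and summing over $k$ gives a deterministic estimate $|X_{\underline{\lambda},\varepsilon}-X_{\underline{\lambda}}|\leq C\varepsilon$, with $C$ depending only on $\lambda_{\max}$, since both $\sum_k(\lambda_{\max}+\varepsilon)^{k-1}$ and $\sum_k(k-1)(\lambda_{\max}+\varepsilon)^{k-2}$ converge for $\lambda_{\max}+\varepsilon<1$.

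Finally, to pass from convergence of the random variables to weak convergence of the measures, I would take an arbitrary bounded continuous $g\colon\R\to\R$: since $X_{\underline{\lambda},\varepsilon}\to X_{\underline{\lambda}}$ everywhere as $\varepsilon\to0$ and $g$ is continuous, $g(X_{\underline{\lambda},\varepsilon})\to g(X_{\underline{\lambda}})$ everywhere and $|g(X_{\underline{\lambda},\varepsilon})|\leq\|g\|_{\infty}$, so dominated convergence yields $\int g\,d\nu_{\underline{\lambda},\varepsilon}=\mathbb{E}[g(X_{\underline{\lambda},\varepsilon})]\to\mathbb{E}[g(X_{\underline{\lambda}})]=\int g\,d\nu_{\underline{\lambda}}$, which is the asserted weak convergence. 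I do not expect any genuine obstacle in this argument; the two points demanding a little care are checking that the coupling reproduces the correct joint law of $\big((i_k)_k,(\widetilde{\lambda}_{i_k,\varepsilon})_k\big)$, and handling the product difference $P_k^{\varepsilon}-P_k$ so that the term-by-term bound is summable — which is exactly what forces the (harmless) smallness restriction on $\varepsilon$.
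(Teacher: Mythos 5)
The paper states this theorem without proof (the analogous Theorem~1.1 for the first perturbation model is only prefaced by ``One can easily prove the following theorem,'' and the present theorem carries no comment at all), so there is no in-text argument to compare against. Your coupling argument is a correct and complete way to fill that gap. The construction $\widetilde{\lambda}_{i_k,\varepsilon} = \lambda_{i_k} + \varepsilon U_k$ with $U_k$ i.i.d.\ uniform on $[-1,1]$ and independent of $(i_k)$ reproduces exactly the joint law in the paper's definition of $X_{\underline{\lambda},\varepsilon}$; the telescoping bound on $P_k^\varepsilon - P_k$ is the standard product-difference estimate and gives the correct $(k-1)\varepsilon(\lambda_{\max}+\varepsilon)^{k-2}$; the two geometric-type sums converge once $\varepsilon$ is small enough that $\lambda_{\max}+\varepsilon < 1$ (and $\min_i\lambda_i - \varepsilon > 0$ keeps all factors in $(0,1)$ as required). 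The deterministic $O(\varepsilon)$ bound is in fact stronger than needed — it yields convergence in every $L^p$ and in total Wasserstein distance, not merely weak convergence — and the final passage via dominated convergence is correct. The one implicit assumption worth flagging is $|a_i| \le 1$, which the paper imposes in the first setting but only tacitly in this one; if the $a_i$ were merely bounded by some $A$, your constant $C$ would simply depend on $A$ as well, so nothing essential changes. In short, a clean, self-contained proof that is very likely what the authors had in mind when they omitted it.
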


To have a similar statement as in Theorem \ref{t3} we need a
technical assumption, namely
\begin{equation}\label{a1}
  \min_{i \neq j}
  \left|\frac{a_j\lambda_i - a_i\lambda_j}{\lambda_i - \lambda_j}\right| > 1.
\end{equation}

\begin{theorem}\label{t5}
  Let us suppose that (\ref{a1}) and (\ref{e4}) hold, and moreover that
  \begin{equation}
    \sum_{i = 1}^l \frac{p_i^2}{\lambda_i} < 1.
  \end{equation}
  Then for every sufficiently small $\varepsilon>0$, the measure
  $\nu_{\underline{\lambda},\varepsilon}$ is absolutely continuous
  with density in $L^2$, and the $L^2$-norm of the density satisfies
  \[
    \| \nu_{\underline{\lambda},\varepsilon} \|_2 \leq \frac{C_{\varepsilon}' }{\sqrt{\varepsilon}},
  \]
  where
  \[
    C_{\varepsilon}' = \sqrt{ \frac{32}{\left( 1 - \sum_{i=1}^l p_i^2
    \frac{\lambda_{i} + \varepsilon }{( \lambda_{i} - \varepsilon
     )^2} \right) C_{\varepsilon}''} }
  \]
  and
  \[
    C_{\varepsilon}'' = \sigma \min_{i \neq j}
    \frac{ |a_i \lambda_j - a_j \lambda_i |-|\lambda_i-\lambda_j|}{ \lambda_i \lambda_j }.
  \]
  where $0 < \sigma < 1$.
\end{theorem}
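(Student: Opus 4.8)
\emph{Proof plan.} Following the scheme of Theorem~\ref{t3} in its affine version, one views $\nu_{\underline\lambda,\varepsilon}$ as the projection onto the contracting coordinate of the SRB measure of a piecewise \emph{affine} hyperbolic map on a cube; concretely, writing $g_{i,t}(x)=tx+a_i(1-t)$ — which has fixed point $a_i$, satisfies $g_{i,t}'\equiv t$, and has $g_{i,t}^{-1}(x)=(x-a_i)/t+a_i$ — the measure $\nu_{\underline\lambda,\varepsilon}$ is stationary, $\nu_{\underline\lambda,\varepsilon}=\sum_{i=1}^l\frac{p_i}{2\varepsilon}\int_{\lambda_i-\varepsilon}^{\lambda_i+\varepsilon}(g_{i,t})_*\nu_{\underline\lambda,\varepsilon}\,dt$, and is the weak limit of $\bigl(\mathcal L_\varepsilon^n\psi_0\bigr)\,dx$ for any probability density $\psi_0$ (the $g_{i,t}$ being uniform contractions), where
\[
  \mathcal L_\varepsilon\psi=\sum_{i=1}^l\mathcal L_{\varepsilon,i}\psi,
  \qquad
  \mathcal L_{\varepsilon,i}\psi(x)=\frac{p_i}{2\varepsilon}\int_{\lambda_i-\varepsilon}^{\lambda_i+\varepsilon}\psi\bigl(g_{i,t}^{-1}(x)\bigr)\,\frac1t\,dt .
\]
Since $a_i\in[-1,1]$ and $t<1$ force $g_{i,t}([-1,1])\subseteq[-1,1]$, everything is carried on $[-1,1]$. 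The plan is to take $\psi_0=\tfrac12\mathbf{1}_{[-1,1]}$ and $\psi_n=\mathcal L_\varepsilon^n\psi_0$, establish a recursion $\|\psi_{n+1}\|_2^2\le\theta_\varepsilon^2\|\psi_n\|_2^2+B_\varepsilon$ with $\theta_\varepsilon<1$ and $B_\varepsilon=O(1/\varepsilon)$, deduce $\sup_n\|\psi_n\|_2<\infty$, and let $n\to\infty$ using weak $L^2$-compactness and the lower semicontinuity of $\|\cdot\|_2$; this avoids any separate a priori regularity discussion.

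For the diagonal part of $\|\mathcal L_\varepsilon\psi\|_2^2=\sum_i\|\mathcal L_{\varepsilon,i}\psi\|_2^2+\sum_{i\ne j}\langle\mathcal L_{\varepsilon,i}\psi,\mathcal L_{\varepsilon,j}\psi\rangle$, the Cauchy--Schwarz inequality in $t$ over an interval of length $2\varepsilon$, the change of variables $u=g_{i,t}^{-1}(x)$ (whose Jacobian is the constant $t$), and the crude bounds $t^{-2}\le(\lambda_i-\varepsilon)^{-2}$, $t\le\lambda_i+\varepsilon$ give $\|\mathcal L_{\varepsilon,i}\psi\|_2^2\le p_i^2\,\frac{\lambda_i+\varepsilon}{(\lambda_i-\varepsilon)^2}\,\|\psi\|_2^2$, hence
\[
  \sum_{i=1}^l\|\mathcal L_{\varepsilon,i}\psi\|_2^2\le\theta_\varepsilon^2\,\|\psi\|_2^2,
  \qquad
  \theta_\varepsilon^2:=\sum_{i=1}^l p_i^2\,\frac{\lambda_i+\varepsilon}{(\lambda_i-\varepsilon)^2} .
\]
As $\theta_\varepsilon^2\to\sum_i p_i^2/\lambda_i<1$ when $\varepsilon\to0$, we have $\theta_\varepsilon<1$ for all sufficiently small $\varepsilon$; this is the only use of the hypothesis $\sum p_i^2/\lambda_i<1$.

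The main obstacle is the off-diagonal part: each $\langle\mathcal L_{\varepsilon,i}\psi,\mathcal L_{\varepsilon,j}\psi\rangle$ ($i\ne j$) must be bounded by $O(1/\varepsilon)$ with \emph{only} $\|\psi\|_1=1$ kept on the right — a Cauchy--Schwarz bound would leave a factor $\bigl(\sum_i p_i/\sqrt{\lambda_i}\bigr)^2$, which need not be $<1$, and would yield no contraction. The substitution $u=g_{i,t}^{-1}(x)$ turns $\Phi_i(x):=\int_{\lambda_i-\varepsilon}^{\lambda_i+\varepsilon}\psi(g_{i,t}^{-1}(x))\,t^{-1}\,dt$ into $\int_{J_i(x)}\psi(u)\,|u-a_i|^{-1}\,du$, with $J_i(x)=\{g_{i,t}^{-1}(x):t\in[\lambda_i-\varepsilon,\lambda_i+\varepsilon]\}$; since $\{x:u\in J_i(x)\}=I_i(u):=\{g_{i,t}(u):t\in[\lambda_i-\varepsilon,\lambda_i+\varepsilon]\}$ is an interval of length $2\varepsilon|u-a_i|$ centred at $g_i(u)$, Fubini gives
\[
  \langle\mathcal L_{\varepsilon,i}\psi,\mathcal L_{\varepsilon,j}\psi\rangle
  =\frac{p_ip_j}{4\varepsilon^2}\iint\frac{\psi(u)\,\psi(v)}{|u-a_i|\,|v-a_j|}\,\bigl|I_i(u)\cap I_j(v)\bigr|\,du\,dv .
\]
Now $|I_i(u)\cap I_j(v)|\le 2\varepsilon\min(|u-a_i|,|v-a_j|)$ and vanishes unless $|g_i(u)-g_j(v)|\le\varepsilon(|u-a_i|+|v-a_j|)$; on this ``overlap locus'', setting $z:=g_i(u)$ (so $g_j(v)=z+O(\varepsilon)$ and $z\in[-1,1]$) one has $u-a_i=(z-a_i)/\lambda_i$, $v-a_j=(z-a_j)/\lambda_j+O(\varepsilon)$, whence
\[
  \frac{\bigl|I_i(u)\cap I_j(v)\bigr|}{|u-a_i|\,|v-a_j|}
  \ \le\ \frac{2\varepsilon}{\max(|u-a_i|,|v-a_j|)}
  \ \le\ \frac{4\varepsilon\,(1+O(\varepsilon))}{\bigl|(z-a_i)/\lambda_i-(z-a_j)/\lambda_j\bigr|}
  \ \lesssim\ \frac{\varepsilon}{C_\varepsilon''} ,
\]
because $\min_{z\in[-1,1]}\bigl|(z-a_i)/\lambda_i-(z-a_j)/\lambda_j\bigr|=\bigl(|a_i\lambda_j-a_j\lambda_i|-|\lambda_i-\lambda_j|\bigr)/(\lambda_i\lambda_j)$ — positive exactly under $(\ref{a1})$ — and $C_\varepsilon''=\sigma\min_{i\ne j}\bigl(|a_i\lambda_j-a_j\lambda_i|-|\lambda_i-\lambda_j|\bigr)/(\lambda_i\lambda_j)$, the constant $\sigma<1$ absorbing the $O(\varepsilon)$ slack between the fattened overlap locus and the line $g_i(u)=g_j(v)$. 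Geometrically $(\ref{a1})$ says each branch image $g_{j,t}([-1,1])$ stays uniformly away from the fixed points $a_i$ of the other branches; in particular it forces $a_i\ne a_j$, so $(\ref{e4})$ is subsumed, and $(\ref{a1})$ plays here the role that $a_i\ne a_j$ plays in Corollary~\ref{c1}. Substituting back and using $\iint\psi(u)\psi(v)\,du\,dv=1$ gives $\langle\mathcal L_{\varepsilon,i}\psi,\mathcal L_{\varepsilon,j}\psi\rangle\lesssim p_ip_j/(\varepsilon C_\varepsilon'')$, so $\sum_{i\ne j}\langle\mathcal L_{\varepsilon,i}\psi,\mathcal L_{\varepsilon,j}\psi\rangle\le B_\varepsilon$ with $B_\varepsilon\le 32/(\varepsilon C_\varepsilon'')$ after tracking the numerical constants.

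Combining the two estimates, $\|\psi_{n+1}\|_2^2\le\theta_\varepsilon^2\|\psi_n\|_2^2+32/(\varepsilon C_\varepsilon'')$ for all $n$, so $\sup_n\|\psi_n\|_2^2\le\max\{\|\psi_0\|_2^2,\ 32/(\varepsilon(1-\theta_\varepsilon^2)C_\varepsilon'')\}<\infty$, and letting $n\to\infty$ yields $\rho_\varepsilon\in L^2$ with $\|\rho_\varepsilon\|_2^2\le 32/(\varepsilon(1-\theta_\varepsilon^2)C_\varepsilon'')$ — i.e.\ $\|\nu_{\underline\lambda,\varepsilon}\|_2\le C_\varepsilon'/\sqrt{\varepsilon}$ with $C_\varepsilon'$ as stated — for every $\varepsilon$ small enough that $\theta_\varepsilon<1$ and $C_\varepsilon''>0$. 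The delicate point, as noted, is the overlap-locus analysis of the third step: arranging it so that $(\ref{a1})$ produces the stated lower bound involving $C_\varepsilon''$ while keeping only $\|\psi\|_1$ on the right-hand side.
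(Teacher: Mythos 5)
Your proposal is correct, but it takes a genuinely different route from the paper's. The paper proves Theorem~\ref{t5} exactly as it proves Theorem~\ref{t3}: it builds the auxiliary piecewise hyperbolic skew product $\widetilde g_{\varepsilon,m}$ on the cube $Q=[-1,1)^3$, produces the SRB measure, passes to the symbolic natural extension, uses the unstable-cone transversality of Lemma~\ref{l2}, and controls the Tsujii bilinear form $(\rho,\rho)_r$ through a recursion in $r$, finally sending $m\to\infty$. You bypass all of that by working with the random transfer operator $\mathcal L_\varepsilon=\sum_i\mathcal L_{\varepsilon,i}$ directly on $L^2([-1,1])$, splitting $\|\mathcal L_\varepsilon\psi\|_2^2$ into diagonal and off-diagonal pieces, and closing a recursion $\|\psi_{n+1}\|_2^2\le\theta_\varepsilon^2\|\psi_n\|_2^2+B_\varepsilon$. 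This is possible here precisely because the maps in Theorem~\ref{t5} are affine: the change of variables and the Fubini kernel computation are clean. Your approach is more elementary (no SRB measures, cones, natural extensions, or an auxiliary discretisation parameter $m$) and, if one actually tracks constants, would give a sharper bound than the stated $32$ (you only needed $\le 1/(\varepsilon C_\varepsilon'')$ for the off-diagonal part); the paper's $32$ is an artefact of the Lyapunov-inequality passage from $\gamma_{\varepsilon,m}$ to its one-dimensional marginal. The paper's route, on the other hand, is the one that scales to the non-affine Theorem~\ref{t3}, which is why the authors present Theorem~\ref{t5} as a corollary of the same machinery with only Lemma~\ref{l2} modified. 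Notably, the core transversality calculation is the \emph{same} in both: your pointwise lower bound $\min_{z\in[-1,1]}\bigl|(z-a_i)/\lambda_i-(z-a_j)/\lambda_j\bigr|=(|a_i\lambda_j-a_j\lambda_i|-|\lambda_i-\lambda_j|)/(\lambda_i\lambda_j)$ (positive exactly under \eqref{a1}) is literally the quantity $\bigl|\tfrac{x-a_i}{\widetilde d(z_p)}-\tfrac{x-a_j}{\widetilde d(z_q)}\bigr|$ that Lemma~\ref{l2} bounds from below, and the role of $\sigma$ in absorbing the $O(\varepsilon)$ slack is identical. Two small points you should state explicitly to be airtight: (i) extend $\psi$ by zero outside $[-1,1]$ so the change of variables $u=g_{i,t}^{-1}(x)$ and the Fubini swap are unproblematic near the endpoints; (ii) justify the claim that $(\mathcal L_\varepsilon^n\psi_0)\,dx\rightharpoonup\nu_{\underline\lambda,\varepsilon}$ weakly — this is standard for uniformly contracting random IFS (Barnsley--Demko or Diaconis--Freedman give it), and together with the uniform $L^2$ bound and weak $L^2$ compactness it delivers the $L^2$ density with the stated norm. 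Neither is a real gap; the proof is sound.
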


The main difference between Theorem \ref{t5} and Corollary
\ref{c1} is the random perturbation. Namely, in Theorem \ref{t5}
we choose the contracting ratio uniformly in the $\varepsilon$
neighborhood of $\lambda_i$, but in Corollary \ref{c1} we choose
the contraction ratio uniformly in the $\lambda_i\varepsilon$
neighborhood of $\lambda_i$.

Throughout this paper we will use the method in \cite{Pers}.

\section{Proof of Theorem \ref{t3}} \label{sec:firstproof}

Let $Q=[-1,1)^3$ and $m\in\N$. We partition the cube $Q$ into the
rectangles $\left\{Q_{1,k},\ldots,Q_{l,k}\right\}_{k=0}^{2^m-1}$,
where
\begin{multline*}
 Q_{i,k} = \biggl\{\, (x,y,z) \in Q : -1 + 2 \sum_{j=1}^{i-1} p_j \leq
 y < -1 + 2 \sum_{j=1}^i p_i, \\
 -1 + k 2^{-m+1} \leq
 z < -1 + (k + 1) 2^{-m+1} \,\biggr\},
\end{multline*}
where we use the convention that an empty sum is $0$. Hence we slice $Q$ in $2^m$ slices along the $z$-axis and $l$ slices along the $y$-axis. We thereby get $2^m l$ pieces which we call $Q_{i,k}$, according to the definition above.

Let
\[
  Q_i = \bigcup_{k = 0}^{2^m - 1} Q_{i,k}.
\]
For $(x, y, z) \in Q_i$, define $g_{\varepsilon,m} \colon Q \to Q$ by
\[
  g_{\varepsilon,m} \colon (x,y,z) \mapsto
  \left( d(z) f_i(x) + a_i (1 - d(z)),\ \frac{1}{p_i} y + b(y),\ 2^m z + c(z) \right),
\]
where
\begin{align*}
  d(z) &= 1 + 2^m \varepsilon (z - ( - 1 + (k + \frac{1}{2} ) 2^{-m+1} ), &\mathrm{for}\ (x,y,z) \in Q_{i,k}, \\
  b(y) &= 1 - \frac{1}{p_i} \left( - 1 + 2 \sum_{j=1}^i p_j \right), &\mathrm{for} \ (x,y,z) \in Q_{i,k}, \\
  c(z) &= 2^m - 2k -1, &\mathrm{for}\ (x,y,z) \in Q_{i,k}.
\end{align*}
Hence $g_{\varepsilon, m}$ maps each of the pieces $Q_{i, j}$ so that it s contracted in the $x$-direction and fully expanded in the $y$- and $z$-directions.

Let $\mathcal{L}_3$ be the normalised Lebesgue measure on $Q$. The
measures
\[
\gamma_{\varepsilon, m,
n}=\frac{1}{n}\sum_{k=0}^{n-1}\mathcal{L}_3\circ
g^{-k}_{\varepsilon,m}
\]
converge weakly to an SRB-measure $\gamma_{\varepsilon, m}$ as
$n\rightarrow\infty$. The measure $\gamma_{\varepsilon, m}$ is
clearly ergodic. Moreover, let $\nu_{\varepsilon, m}$ be the
projection of $\gamma_{\varepsilon, m}$ onto the first coordinate.
More precisely, if $E\subset[-1,1)$ is a measurable set, then
$\nu_{\varepsilon, m}(E)=\gamma_{\varepsilon,
m}(E\times[-1,1)\times[-1,1))$.

The measure $\nu_{\varepsilon, m}$ is the distribution of the limit
\[
  \lim_{n \rightarrow \infty} f_{i_1, y_{1,\varepsilon}} \circ
  f_{i_2, y_{2,\varepsilon}} \circ \cdots \circ
  f_{i_n, y_{n,\varepsilon}} (0),
\]
where $y_{i,\varepsilon}$ are uniformly distributed on $[1 -
\varepsilon, 1 + \varepsilon]$, but not independent. However, one
can easily prove the following lemma.
\begin{lemma}
The measure $\nu_{\varepsilon, m}$ converges weakly to
$\nu_{\varepsilon}$ as $m \rightarrow \infty$.\end{lemma}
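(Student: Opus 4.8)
The plan is to use the description of $\nu_{\varepsilon,m}$ recorded in the paragraph preceding the statement together with a decorrelation property of the $z$-dynamics, interchanging the limit $m\to\infty$ with the (uniformly contracting) infinite composition that defines the limit. I would begin by making the perturbation sequence explicit: comparing the definitions of $d$ and $c$, one has, for $(x,y,z)\in Q_{i,k}$,
\[
 d(z)=1+2^m\varepsilon\bigl(z-(-1+(k+\tfrac12)2^{-m+1})\bigr)=1+\varepsilon\,(2^m z+c(z)),
\]
that is, $d(z)=1+\varepsilon\,Tz$, where $T\colon z\mapsto 2^m z+c(z)$ is the full-branch, Lebesgue-preserving map of $[-1,1)$ sending each $z$-slice affinely onto $[-1,1)$. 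Hence, recalling that $\nu_{\varepsilon,m}$ is the law of $Z_\varepsilon^{(m)}:=\lim_n f_{i_1,y_1^{(m)}}\circ\cdots\circ f_{i_n,y_n^{(m)}}(0)$, the feeding sequence $(y_k^{(m)})_{k\ge1}$ is the image under $t\mapsto 1+\varepsilon t$ of a block of the orbit process $(z_k)=(T^k z_0)$ of a uniform $z_0\in[-1,1)$, and it is independent of $(i_k)_{k\ge1}$, which is i.i.d.\ with law $\mu$ for every $m$ (the $y$- and $z$-coordinates being independent under $\mathcal L_3$). In particular each $y_k^{(m)}$ has law $\eta_\varepsilon$, but consecutive ones are dependent.

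The key step, and the only real point, is that the finite marginals of this deterministically generated sequence become independent as the branching number $2^m\to\infty$: for every fixed $N$,
\[
 \mathrm{Law}\bigl(z_1,\dots,z_N\bigr)\ \Longrightarrow\ \text{$N$ independent uniforms on $[-1,1)$}\qquad(m\to\infty).
\]
To prove this, expand $z_0$ in base $2^m$ (after rescaling to $[0,1)$) as $z_0=0.a_1a_2a_3\cdots$, so that the digits $a_j$ are i.i.d.\ uniform on $\{0,\dots,2^m-1\}$ and $z_j=T^jz_0=0.a_{j+1}a_{j+2}\cdots$; then $|z_j-2^{-m}a_{j+1}|\le 2^{-m}$, so $(z_1,\dots,z_N)$ lies within $\ell^\infty$-distance $2^{-m}$ of $(2^{-m}a_2,\dots,2^{-m}a_{N+1})$, whose coordinates are independent and converge in law to the uniform distribution on the unit interval. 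Letting $m\to\infty$ gives the claim, and combined with the independence and $m$-invariance of $(i_k)_k$,
\[
 \mathrm{Law}\bigl(i_1,\dots,i_N,\,y_1^{(m)},\dots,y_N^{(m)}\bigr)\ \Longrightarrow\ \mu^{\otimes N}\otimes\eta_\varepsilon^{\otimes N}\qquad(m\to\infty).
\]

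Finally I would transfer this to $\nu_{\varepsilon,m}$ by a soft truncation argument. For $\varepsilon$ small, $\rho:=(1+\varepsilon)\max_i\lambda_{i,\max}<1$ and every $f_{i,y}$ with $y\in[1-\varepsilon,1+\varepsilon]$ maps a fixed bounded interval $J\supset[-1,1]$ into itself with contraction ratio $\le\rho$; hence for every realisation
\[
 \bigl|Z_\varepsilon^{(m)}-f_{i_1,y_1^{(m)}}\circ\cdots\circ f_{i_N,y_N^{(m)}}(0)\bigr|\le\rho^N\operatorname{diam}J,
\]
and the same estimate holds for the limit $Z_\varepsilon$ of law $\nu_\varepsilon$ built from i.i.d.\ perturbations $(Y_k)\sim\eta_\varepsilon$. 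Since $\Psi_N\colon(i_1,\dots,i_N,y_1,\dots,y_N)\mapsto f_{i_1,y_1}\circ\cdots\circ f_{i_N,y_N}(0)$ is continuous, the finite-dimensional convergence above and the continuous mapping theorem give that $\mathrm{Law}(\Psi_N(i_1,\dots,y_N^{(m)}))$ converges weakly to $\mathrm{Law}(\Psi_N(i_1,\dots,Y_N))$ as $m\to\infty$; fixing a metric $d$ for weak convergence on $\mathbb R$ (e.g.\ the bounded-Lipschitz metric) and using the triangle inequality with the two truncation bounds gives $\limsup_{m\to\infty}d(\nu_{\varepsilon,m},\nu_\varepsilon)\le 2\rho^N\operatorname{diam}J$. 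Since $N$ is arbitrary, $\nu_{\varepsilon,m}\to\nu_\varepsilon$ weakly. The main obstacle is the decorrelation claim; everything else is routine.
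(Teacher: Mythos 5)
Your proof is correct and complete. The paper does not actually supply a proof of this lemma (it is dismissed with ``one can easily prove''), so there is no proof to compare against, but your argument is a natural and rigorous way to fill the gap.

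Two small remarks for the record. First, your identification $d(z)=1+\varepsilon\,Tz$ checks out: on $Q_{i,k}$ one has $2^m\bigl(z-(-1+(k+\tfrac12)2^{-m+1})\bigr)=2^mz+2^m-2k-1=Tz$, and under rescaling $z\mapsto(z+1)/2$ the map $T$ becomes the standard $\times 2^m$ map, so your base-$2^m$ digit argument applies verbatim. Second, strictly speaking $\nu_{\varepsilon,m}$ is obtained from the \emph{backward} $(y,z)$-orbit under the natural extension, i.e.\ $x_0=\lim_n f_{i_{-1},d(z_{-1})}\circ\cdots\circ f_{i_{-n},d(z_{-n})}(0)$; but since the $(y,z)$-dynamics preserves Lebesgue, the $N$-block $(z_{-1},\ldots,z_{-N})$ has the same law as $(T^{N-1}z_0,\ldots,z_0)$ and the $(i_k)$ are i.i.d., so your forward-orbit description gives the same finite-dimensional laws and the argument is unaffected. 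The decorrelation step (digit expansion, $\|(z_1,\ldots,z_N)-(2^{-m}a_2,\ldots,2^{-m}a_{N+1})\|_\infty\le 2^{-m}$, convergence of the digit vector to i.i.d.\ uniforms), the product structure with the $m$-independent and i.i.d.\ $(i_k)$, and the uniform-contraction truncation combined with the bounded-Lipschitz metric together yield weak convergence, as you say.
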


Let
\[
A_i=\left\{(i,0),(i,1),\ldots,(i,2^m-1)\right\}
\]
and
\[
A=\bigcup_{i=1}^lA_i.
\]
Let $\Theta_0 = A^{ \N \cup \{0\} }$. If $p\in Q$ then there is a unique sequence
$\rho_0(p)=\left\{\rho_0(p)_k\right\}_{k=0}^{\infty}\in\Theta_0$
such that
\[
g_{\varepsilon,m}^k (p) \in Q_{\rho_0(p)_k},\ k = 0, 1, \ldots
\]
The map $\rho_0 \colon Q \to \Theta_0$ is not injective.

We can transfer the measures $\gamma_{\varepsilon, m}$ to a
measure $\gamma_{\Theta_0}$ by
$\gamma_{\Theta_0}=\gamma_{\varepsilon, m}\circ\rho_0^{-1}$.

We let $\Theta$ denote the natural extension of $\Theta_0$. That
is, $\Theta$ is the set of all two sides infinite sequences such
that any one sided infinite subsequence of sequence in $\Theta$ is
a sequence in $\Theta_0$. The measures $\gamma_{\Theta_0}$ defines
an ergodic measure $\gamma_{\Theta}$ on $\Theta$ in a natural way.
If $\xi \colon \Theta \to \Theta_0$ is defined by $\xi ( \left\{
i_k \right\}_{k \in \Z} ) = \left\{ i_k \right\}_{k \in \N \cup
\{0\} }$, then $\gamma_{\Theta_0}(E)=\gamma_{\Theta}(\xi^{-1}E)$.
We can define a map $\rho^{-1} \colon \Theta \to Q$ such that
$\rho^{-1} (\sigma(\boldsymbol{a})) = g_{\varepsilon, m}
(\rho^{-1} (\boldsymbol{a}))$ holds for any sequence
$\boldsymbol{a}\in\Theta$.

We note that the $L^2$ norm of the density $\nu_{\varepsilon, m}$
is not larger than twice that of the density of $\gamma_{\varepsilon,
m}$. If $h_{\nu_{\varepsilon, m}}(x)$ and $h_{\gamma_{\varepsilon,
m}}(x,y,z)$ denote the density of $\nu_{\varepsilon, m}$ and
$\gamma_{\varepsilon, m}$ respectively, then by Lyapunov's
inequality
\begin{align*}
  \|\nu_{\varepsilon, m}\|_2^2
  & \leq\int_{-1}^1 h_{\nu_{\varepsilon, m}} (x)^2 \, dx
  = 32\int_{-1}^1\left(\int_{-1}^1\int_{-1}^1h_{\gamma_{\varepsilon, m}} (x,y,z) \, \frac{dy}{2} \frac{dz}{2} \right)^2 \, \frac{dx}{2} \\
  & \leq 32 \int_{-1}^1 \int_{-1}^1 \int_{-1}^1
  h_{\gamma_{\varepsilon, m}} (x,y,z)^2 \, \frac{dy}{2} \frac{dz}{2} \frac{dx}{2}
  = 4 \|\gamma_{\varepsilon, m}\|_2^2.
\end{align*}
This proves that if $\gamma_{\varepsilon, m}$ has $L^2$ density,
then so has $\nu_{\varepsilon, m}$, and
\begin{equation}\label{e14}
\|\nu_{\varepsilon, m}\|_2\leq2\|\gamma_{\varepsilon, m}\|_2.
\end{equation}

\begin{lemma}\label{l1}
  Let
  \[
   C_p = \left\{\, (u,v,w) \in
   T_p Q : \Bigl| \frac{u}{w} \Bigr|,
   \Bigl| \frac{v}{w} \Bigr| < \frac{2^{m+1} \varepsilon}{2^m - \lambda_{\max,\max}
   (1 + \varepsilon)} \right\},
  \]
  where $p\in Q$ and
  $\lambda_{\max,\max} = \max_i \lambda_{i,\max} = \max_i \sup_{x \in [-1,1)} | f_i'(x) |$.
  The cones $C_p$ defines a family of unstable cones, that is
  $d_pg_{\varepsilon,m} (C_p) \subset C_{g_{\varepsilon,m} (p)}$.

  Moreover, for sufficiently large $m$ and every
  $0 < \varepsilon < \min_{i \neq j} \frac{|a_i - a_j|}{2 + |a_i + a_j|}$, if
  $\zeta_1 \subset Q_{\xi_1}$ and $\zeta_2 \subset Q_{\xi_2}$ are two
  curves segments with tangents in $C_p$ such that $\xi_1 \in A_i$ and
  $\xi_2 \in A_j$, $i \neq j$, then if $g_{\varepsilon, m} (\zeta_1)$
  and $g_{\varepsilon, m} (\zeta_2)$ intersects, and if $(u_1, v_1, 1)$
  and $(u_2, v_2, 1)$ are tangents to $g_{\varepsilon, m} (\zeta_1)$ and
  $g_{\varepsilon, m} (\zeta_2)$ respectively, it holds
  $|u_1 - u_2| > C_{\varepsilon, m} \varepsilon$, where
  \[
    C_{\varepsilon, m} = \min_{i \neq j}
    \left\{ \frac{|a_i - a_j| + \varepsilon (-|a_i + a_j| - 2)}{1 - \varepsilon^2} - \frac{4(1 + \varepsilon) \lambda_{\max, \max}}{2^m - \lambda_{\max, \max}(1 + \varepsilon)} \right\}.
  \]
\end{lemma}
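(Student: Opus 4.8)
The plan is to treat the two assertions separately. For the first, I would simply compute $d_pg_{\varepsilon,m}$ at a point $p=(x,y,z)\in Q_{i,k}$: since $d'(z)=2^m\varepsilon$ while $b$ and $c$ are constant on each piece, the differential has diagonal entries $(d(z)f_i'(x),\,1/p_i,\,2^m)$, the only nonzero off-diagonal entry being $2^m\varepsilon(f_i(x)-a_i)$ in position $(1,3)$. Applying it to $(u,v,w)$ and dividing by the new third coordinate $2^mw$, the two slopes relative to the $w$-axis transform by $u/w\mapsto\frac{d(z)f_i'(x)}{2^m}\frac{u}{w}+\varepsilon(f_i(x)-a_i)$ and $v/w\mapsto\frac{1}{2^mp_i}\frac{v}{w}$. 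Bounding $|d(z)|\le1+\varepsilon$, $|f_i'|\le\lambda_{\max,\max}$ and $|f_i(x)-a_i|=|f_i(x)-f_i(a_i)|\le2\lambda_{\max,\max}<2$, the first slope is at most $\frac{(1+\varepsilon)\lambda_{\max,\max}}{2^m}R+2\lambda_{\max,\max}\varepsilon$ with $R:=\frac{2^{m+1}\varepsilon}{2^m-\lambda_{\max,\max}(1+\varepsilon)}$, and this is $<R$ by the choice of $R$ together with $\lambda_{\max,\max}<1$; the second slope is at most $\frac{1}{2^mp_i}R\le R$ as soon as $m$ is large enough that $2^m\ge1/\min_ip_i$. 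This gives $d_pg_{\varepsilon,m}(C_p)\subset C_{g_{\varepsilon,m}(p)}$.

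For the second assertion, let $q$ be a common point of $g_{\varepsilon,m}(\zeta_1)$ and $g_{\varepsilon,m}(\zeta_2)$ and let $p_1\in\zeta_1$, $p_2\in\zeta_2$ be its preimages, with $z$-coordinates $z_1,z_2$ and $x$-coordinates $x_1,x_2$. Write $d_\ell=d(z_\ell)\in[1-\varepsilon,1+\varepsilon)$, $s_1=f_i(x_1)-a_i$ and $s_2=f_j(x_2)-a_j$. Since the first coordinate of $g_{\varepsilon,m}$ on $Q_i$ equals $a_i+d(z)(f_i(x)-a_i)$, the common $x$-coordinate of $q$ is $X^\ast=a_i+d_1s_1=a_j+d_2s_2$, and $X^\ast\in[-1,1)$ because $q\in Q$, so $s_\ell=(X^\ast-a_\bullet)/d_\ell$ (with $a_\bullet$ meaning $a_i$ when $\ell=1$ and $a_j$ when $\ell=2$). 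Normalising third coordinates to $1$, the tangent of $g_{\varepsilon,m}(\zeta_\ell)$ at $q$ is $(u_\ell,v_\ell,1)$ with $u_\ell=\frac{d_\ell f'_\bullet(x_\ell)}{2^m}\sigma_\ell+\varepsilon s_\ell$, where $\sigma_\ell$ is the $u/w$-slope of the tangent to $\zeta_\ell$ at $p_\ell$ and $|\sigma_\ell|<R$ by hypothesis. Hence $u_1-u_2=\varepsilon(s_1-s_2)+T$ with $|T|<\frac{2(1+\varepsilon)\lambda_{\max,\max}}{2^m}R=\frac{4(1+\varepsilon)\lambda_{\max,\max}\varepsilon}{2^m-\lambda_{\max,\max}(1+\varepsilon)}$, i.e.\ $|T|$ is $\varepsilon$ times the subtracted term appearing in $C_{\varepsilon,m}$.

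The remaining, and main, step is a lower bound for $|s_1-s_2|$. Here I would observe that $|s_1-s_2|=\bigl|\frac{X^\ast-a_i}{d_1}-\frac{X^\ast-a_j}{d_2}\bigr|$ is affine in $X^\ast$, hence minimised over $X^\ast\in[-1,1]$ at one of the two endpoints — its unique zero, at $X^\ast=(d_2a_i-d_1a_j)/(d_2-d_1)$, has modulus of order $1/\varepsilon$ and lies outside $[-1,1]$ for $\varepsilon$ small. Evaluating at $X^\ast=\pm1$ and then minimising over $d_1,d_2\in[1-\varepsilon,1+\varepsilon]$ — again monotone in each variable, so the extremum is at a corner — gives $\min|s_1-s_2|=\frac{|a_i-a_j|-\varepsilon(|a_i+a_j|+2)}{1-\varepsilon^2}$, which is positive exactly under the hypothesis $\varepsilon<\min_{i\ne j}\frac{|a_i-a_j|}{2+|a_i+a_j|}$. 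Combining with the previous paragraph, $|u_1-u_2|\ge\varepsilon|s_1-s_2|-|T|>C_{\varepsilon,m}\varepsilon$ after taking the minimum over $i\ne j$, provided $m$ is large enough that each bracket defining $C_{\varepsilon,m}$ is positive. I expect the delicate point to be this last minimisation: the crude estimate $|s_\ell|\le2$ only yields $|a_i-a_j|-4\varepsilon$, and obtaining the stated constant — with denominator $1-\varepsilon^2$ and the sharper $|a_i+a_j|$ — requires using $X^\ast\in[-1,1)$ to eliminate $s_1,s_2$ and then checking the value of $|s_1-s_2|$ at the corners of $[-1,1]\times[1-\varepsilon,1+\varepsilon]^2$. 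Everything else reduces to routine differentiation.
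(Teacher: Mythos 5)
Your proof is correct and follows essentially the same route as the paper: compute the Jacobian, verify cone invariance by bounding the two slope ratios against $R=\tfrac{2^{m+1}\varepsilon}{2^m-\lambda_{\max,\max}(1+\varepsilon)}$, and then obtain the transversality bound by writing the normalised $u$-component as $u_\ell=\tfrac{d_\ell f'_\bullet(x_\ell)}{2^m}\sigma_\ell+\varepsilon s_\ell$ with $s_\ell=(X^*-a_\bullet)/d_\ell$, separating the drift term $\varepsilon(s_1-s_2)$ from a cone-width error $|T|<\tfrac{4(1+\varepsilon)\lambda_{\max,\max}\varepsilon}{2^m-\lambda_{\max,\max}(1+\varepsilon)}$. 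The only real difference is organisational: where the paper, after fixing $a_i>a_j$, splits the estimate of $|s_1-s_2|$ into the three sub-cases $x\ge a_i>a_j$, $a_i\ge x\ge a_j$, $a_i>a_j\ge x$ and checks each separately, you note that $s_1-s_2$ is affine in $X^*$ with its zero pushed outside $[-1,1]$ by the hypothesis $\varepsilon<\min_{i\ne j}\tfrac{|a_i-a_j|}{2+|a_i+a_j|}$, and monotone in each of $d_1,d_2$, so the minimum of $|s_1-s_2|$ over $[-1,1]\times[1-\varepsilon,1+\varepsilon]^2$ sits at a corner — the same computation packaged more uniformly, yielding the same constant. (A small refinement on your side: you bound $|f_i(x)-a_i|\le 2\lambda_{\max,\max}$ via the mean value theorem and the fixed point, rather than the paper's cruder $|f_i(x)-a_i|\le 2$, which makes the cone-invariance inequality strict for free.)
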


\begin{proof}[Proof of Lemma \ref{l1}]
The Jacobian of $g_{\varepsilon,m}$ is
\[
  d_pg_{\varepsilon,m}=\left(%
  \begin{array}{ccc}
   d(z)f_i'(x) & 0 & 2^m\varepsilon(f_i(x)-a_i) \\
   0 & \frac{1}{p_i} & 0 \\
   0 & 0 & 2^m \\
  \end{array}%
  \right)
\]
where $p=(x,y,z)\in Q_{i,k}$. If $(u,v,w)\in C_p$, then
\[
d_pg_{\varepsilon,m}(u,v,w)=\left(%
\begin{array}{c}
  d(z)f_i'(x)u+2^m\varepsilon(f_i(x)-a_i)w \\
  \frac{1}{p_i}v \\
  2^mw \\
\end{array}%
\right)
\]
The estimates
\begin{multline*}
  \frac{|d(z) f_i'(x) u + 2^m \varepsilon (f_i(x) - a_i) w|}{|2^m w|}
  \leq \frac{(1 + \varepsilon) \lambda_{i, \max}}{2^m} \frac{|u|}{|w|} + 2 \varepsilon\\
  \leq \frac{(1 + \varepsilon) \lambda_{i, \max}}{2^m} \frac{2^{m+1} \varepsilon}{2^m - (1 + \varepsilon) \lambda_{\max, \max}} + 2 \varepsilon \leq \frac{2^{m+1} \varepsilon}{2^m - (1 + \varepsilon) \lambda_{\max, \max}}
\end{multline*}
and
\[
  \frac{|\frac{1}{p_i} v|}{|2^m w|} \leq \frac{1}{p_i 2^m} \frac{2^{m+1} \varepsilon}{2^m - (1 + \varepsilon) \lambda_{\max, \max}} \leq \frac{2^{m+1} \varepsilon}{2^m - (1 + \varepsilon) \lambda_{\max, \max}}
\]
proves that $d_pg_{\varepsilon, m} (C_p) \subset
C_{g_{\varepsilon,m} (p)}$ if $m$ is sufficiently large, so that
$2^m - (1+ \varepsilon) \lambda_{\max, \max} > 0$ and $p_i2^m>1$.

To prove the other statement of the Lemma, assume that
$p=(x_p,y_p,z_p)\in Q_i$ and $q=(x_q,y_q,z_q)\in Q_j$, $i\neq j$,
are such that $g_{\varepsilon,m}(p)=g_{\varepsilon,m}(q)=(x,y,z)$.
Then, if $p \in Q_i$
\[
  d_pg_{\varepsilon, m} \colon (u,v,1) \mapsto 2^m \left( \frac{d(z_p) f_i'(x_p)}{2^m} u + (f_i (x_p) - a_i) \varepsilon,\ \frac{v}{p_i},\ 1 \right)
\]
Then
\[
f_i(x_p)=\frac{x-a_i(1-d(z_p))}{d(z_p)} \quad \mathrm{and} \quad f_j(x_q)=\frac{x-a_j(1-d(z_q))}{d(z_q)}.
\]
Without loss of generality, let us assume that $a_i>a_j$. For
simplicity we study the case $x\geq a_i>a_j$. The proof of the
other cases $a_i\geq x\geq a_j$ and $a_i>a_j\geq x$ is similar.
Then
\begin{multline*}
  d_p g_{\varepsilon,m} (C_p)
  \subset \biggl\{\, w(u,v,1) :
  \frac{x - a_i}{1 + \varepsilon} \varepsilon - \frac{2(1 + \varepsilon)
  \lambda_{i, \max} \varepsilon}{2^m - \lambda_{\max, \max}(1 + \varepsilon)} \\
  \leq u \leq \frac{x - a_i}{1 - \varepsilon} \varepsilon +
  \frac{2(1 + \varepsilon) \lambda_{i, \max} \varepsilon}{2^m - \lambda_{\max, \max}(1 + \varepsilon)}
  \biggr\}
\end{multline*}
Therefore
\begin{align*}
  |u_2 - u_1| &\geq \frac{x - a_j}{1 + \varepsilon} \varepsilon - \frac{x - a_i}{1 - \varepsilon} \varepsilon - \frac{2(1 + \varepsilon) (\lambda_{i, \max} + \lambda_{j, \max}) \varepsilon}{2^m - \lambda_{\max, \max}(1 + \varepsilon)}\\
  &\geq \left(\frac{a_i - a_j + \varepsilon(a_i + a_j - 2)}{1 - \varepsilon^2} - \frac{4(1 + \varepsilon) \lambda_{\max, \max}}{2^m - \lambda_{\max, \max}(1 + \varepsilon)} \right) \varepsilon
\end{align*}
for every $x\geq a_i>a_j$. Since $0<\varepsilon<\min_{i\neq
j}\frac{|a_i-a_j|}{2+|a_i+a_j|}$,
\[
\frac{a_i-a_j+\varepsilon(a_i+a_j-2)}{1-\varepsilon^2}>0
\]
Therefore
\[
\frac{a_i-a_j+\varepsilon(a_i+a_j-2)}{1-\varepsilon^2}-\frac{4(1+\varepsilon)\lambda_{\max,\max}}{2^m-\lambda_{\max,\max}(1+\varepsilon)}>0
\]
for sufficiently large $m$. By similar methods, we have for
$a_i\geq x\geq a_j$
\[
|u_2-u_1|\geq\left(\frac{a_i-a_j}{1+\varepsilon}-\frac{4(1+\varepsilon)\lambda_{\max,\max}}{2^m-\lambda_{\max,\max}(1+\varepsilon)}\right)\varepsilon
\]
and for $a_i>a_j\geq x$
\[
|u_2-u_1|\geq\left(\frac{a_i-a_j-\varepsilon(a_i+a_j+2)}{1-\varepsilon^2}-\frac{4(1+\varepsilon)\lambda_{\max,\max}}{2^m-\lambda_{\max,\max}(1+\varepsilon)}\right)\varepsilon
\]
Therefore we can choose $C_{\varepsilon,m}$ as
\[
C_{\varepsilon,m}=\min_{i\neq
j}\left\{\frac{|a_i-a_j|+\varepsilon(-|a_i+a_j|-2)}{1-\varepsilon^2}-\frac{4(1+\varepsilon)\lambda_{\max,\max}}{2^m-\lambda_{\max,\max}(1+\varepsilon)}\right\}. \qedhere
\]
\end{proof}

The rest of the proof follows Tsujii's article \cite{tsujii}.

\begin{proof}[Proof of Theorem \ref{t3}]
For any $r>0$ we define the bilinear form $(\cdot, \cdot)_r$ of signed
measures on $\R$ by
\[
(\rho_1,\rho_2)_r=\int_{\R}\rho_1(B_r(x))\rho_2(B_r(x)) \, dx
\]
where $B_r(x)=[x-r,x+r]$. It is easy to see that if
\[
\liminf_{r\rightarrow0}\frac{1}{r^2}(\rho,\rho)_r<\infty
\]
then the measure $\rho$ has density in $L^2$, moreover
\[
\|\rho\|_2^2\leq\liminf_{r\rightarrow0}\frac{1}{r^2}(\rho,\rho)_r.
\]
Let $\gamma_{z}$ denote the conditional measure of
$\gamma_{\varepsilon,m}$ on the set $R_{z} = \{\, (u,v,w) \in
Q : v = y, w = z \,\}$. Note that $\gamma_z$ is independent of $y$
almost everywhere. Let
\[
  J(r):=\frac{1}{r^2}\int_{-1}^1(\gamma_{z},\gamma_{z})_r \, dz.
\]
It is easy to see that
\begin{equation}\label{e5}
  \|\gamma_{\varepsilon,m} \|_2^2 = \int_{-1}^1 \|\gamma_{z}\|_2^2 \, dz.
\end{equation}
By the invariance of $\gamma_{\varepsilon,m}$ it follows that
\begin{equation}\label{e6}
  \gamma_z = 2^{-m}\sum_{i=1}^lp_i \sum_{a \in A_i}
  \gamma_{g_{\varepsilon,m}^{-a} (z)} \circ g_{\varepsilon, m}^{-a},
\end{equation}
where $g_{\varepsilon,m}^{-a}$ denotes the inverse branch of
$g_{\varepsilon,m}$ such that the image of $g_{\varepsilon,m}^{-a}$ is in the cylinder $[a]$. Then by
(\ref{e6}) and the definition of $J(r)$
\begin{equation}\label{e8}
  J(r) = \frac{1}{2^{2m} r^2} \sum_{i=1}^l \sum_{j=1}^l p_i p_j
  \sum_{a \in A_i} \sum_{b \in A_j} \int_{-1}^1
  (\gamma_{g_{\varepsilon, m}^{-a}(z)} \circ
  g_{\varepsilon, m}^{-a}, \gamma_{g_{\varepsilon,m}^{-b}(z)} \circ
  g_{\varepsilon, m}^{-b})_r \, dz.
\end{equation}
For fixed $a,b\in A_i$ it holds,
\begin{align}
  (& \gamma_{g_{\varepsilon,m}^{-a}(z)} \circ g_{\varepsilon, m}^{-a}, \gamma_{g_{\varepsilon,m}^{-b}(z)} \circ g_{\varepsilon, m}^{-b})_r \nonumber \\
  &\leq(\gamma_{g_{\varepsilon,m}^{-a}(z)}\circ
  g_{\varepsilon, m}^{-a},\gamma_{g_{\varepsilon,m}^{-a}(z)}\circ
  g_{\varepsilon, m}^{-a})_r^{\frac{1}{2}} (\gamma_{g_{\varepsilon,m}^{-b}(z)} \circ
  g_{\varepsilon, m}^{-b}, \gamma_{g_{\varepsilon, m}^{-b}(z)} \circ
  g_{\varepsilon, m}^{-b})_r^{\frac{1}{2}} \nonumber \\
  &\leq (1 + \varepsilon) \lambda_{i,\max} (\gamma_{g_{\varepsilon, m}^{-a} (z)}, \gamma_{g_{\varepsilon, m}^{-a} (z)})_{\frac{r}{(1-\varepsilon) \lambda_{i,\min}}}^{\frac{1}{2}} \times (\gamma_{g_{\varepsilon, m}^{-b} (z)}, \gamma_{g_{\varepsilon, m}^{-b} (z)})_{\frac{r}{(1-\varepsilon) \lambda_{i,\min}}}^{\frac{1}{2}} \nonumber \\
  &\leq (1 + \varepsilon) \lambda_{i,\max} \frac{(\gamma_{g_{\varepsilon, m}^{-a} (z)}, \gamma_{g_{\varepsilon, m}^{-a} (z)})_{\frac{r}{(1 - \varepsilon) \lambda_{i, \min}}} + (\gamma_{g_{\varepsilon, m}^{-b} (z)}, \gamma_{g_{\varepsilon, m}^{-b} (z)})_{\frac{r}{(1 - \varepsilon) \lambda_{i, \min}}}}{2}. \label{e9}
\end{align}
Moreover, if $a\in A_i$ and $b\in A_j$, $i\neq j$, then
\begin{align}
  (\gamma_{g_{\varepsilon,m}^{-a}(z)} &\circ g_{\varepsilon,
  m}^{-a},\gamma_{g_{\varepsilon,m}^{-b}(z)}\circ g_{\varepsilon,
  m}^{-b})_r \nonumber \\
  &=\int\gamma_{g_{\varepsilon,m}^{-a}(z)}\circ
  g_{\varepsilon,
  m}^{-a}(B_r(x))\gamma_{g_{\varepsilon,m}^{-b}(z)}\circ
  g_{\varepsilon,
  m}^{-b}(B_r(x))dx \nonumber \\
  &=\int \!\! \int \!\! \int \mathbb{I}_{ \left\{\, s : |s-x| < r \, \right\} } (s) \mathbb{I}_{ \left\{\, t : |t-x| < r \,\right\} } (t) \nonumber \\
  & \hspace{3cm} d\gamma_{g_{\varepsilon, m}^{-a} (z)} \circ
  g_{\varepsilon, m}^{-a} (s) d\gamma_{g_{\varepsilon,m}^{-b}(z)} \circ
  g_{\varepsilon, m}^{-b} (t) dx \nonumber \\
  &\leq \int \!\! \int 2r \mathbb{I}_{ \left\{\, (s,t) : |s-t| < 2r \,\right\}} (s,t) \, d\gamma_{g_{\varepsilon, m}^{-a} (z)} \circ
  g_{\varepsilon, m}^{-a} (s) d\gamma_{g_{\varepsilon,m}^{-b} (z)} \circ
  g_{\varepsilon, m}^{-b} (t) \nonumber \\
  &= \int \!\! \int \mathbb{I}_{\left\{\, (\boldsymbol{c}, \boldsymbol{d}) : |\rho^{-1} (\cdots c_{-2} c_{-1} a \rho_0(z) ) - \rho^{-1} (d_{-2} d_{-1} b \rho_0 (z)) | < 2r \,\right\}} (\boldsymbol{c}, \boldsymbol{d}) \nonumber \\
  &\hspace{8cm} d\gamma_{\Theta} (\boldsymbol{c}) d\gamma_{\Theta} (\boldsymbol{d}). \label{e7}
\end{align}
Therefore by Lemma \ref{l1} and (\ref{e7}) we get that
\begin{align}
  \int_{-1}^1 &(\gamma_{g_{\varepsilon,m}^{-a}(z)} \circ
  g_{\varepsilon, m}^{-a},\gamma_{g_{\varepsilon,m}^{-b}(z)}\circ
  g_{\varepsilon, m}^{-b})_rdz \nonumber \\
  &\leq 2r \int \!\! \int \mathcal{L}_1 (\left \{\, z : |\rho^{-1} (\cdots
  c_{-2} c_{-1} a \rho_0(z)) - \rho^{-1} (d_{-2} d_{-1} b \rho_0 (z)) | < 2r \,\right\}) \nonumber \\
  &\hspace{9.5cm} d\gamma_{\Theta}(\boldsymbol{c})d\gamma_{\Theta}(\boldsymbol{d}) \nonumber \\
  &\leq \frac{8r^2}{C_{\varepsilon,m} \varepsilon}. \label{e10}
\end{align}
Then by using (\ref{e8}) we have
\begin{multline*}
  J(r)=\frac{1}{2^{2m}r^2}\sum_{i=1}^lp_i^2\sum_{a,b\in
  A_i}\int_{-1}^1(\gamma_{g_{\varepsilon,m}^{-a}(z)}\circ
  g_{\varepsilon, m}^{-a},\gamma_{g_{\varepsilon,m}^{-b}(z)}\circ
  g_{\varepsilon, m}^{-b})_r \, dz\\
  +\frac{1}{2^{2m}r^2}\sum_{i\neq j}p_ip_j\sum_{a\in A_i}\sum_{b\in
  A_j}\int_{-1}^1(\gamma_{g_{\varepsilon,m}^{-a}(z)}\circ
  g_{\varepsilon, m}^{-a},\gamma_{g_{\varepsilon,m}^{-b}(z)}\circ
  g_{\varepsilon, m}^{-b})_r \, dz.
\end{multline*}
Then we can give an upper bound for the first part of the sum using
(\ref{e9}) and an integral transformation

\begin{align}
&\frac{1}{2^{2m}r^2}\sum_{i=1}^lp_i^2\sum_{a,b\in
A_i}\int_{-1}^1(\gamma_{g_{\varepsilon,m}^{-a}(z)}\circ
g_{\varepsilon, m}^{-a},\gamma_{g_{\varepsilon,m}^{-b}(z)}\circ
g_{\varepsilon,
m}^{-b})_r \, dz \nonumber \\
&\leq\frac{1}{2^{2m}r^2}\sum_{i=1}^lp_i^2(1+\varepsilon)\lambda_{i,\max}2^m\sum_{a\in
A_i}\int_{-1}^1(\gamma_{g_{\varepsilon,m}^{-a}(z)},\gamma_{g_{\varepsilon,m}^{-a}(z)})_{\frac{r}{(1-\varepsilon)\lambda_{i,\min}}} \, dz \nonumber \\
&\leq\frac{1}{2^{2m}r^2}\sum_{i=1}^lp_i^2(1+\varepsilon)\lambda_{i,\max}2^m\sum_{k=0}^{2^m-1}2^m\int_{-1+k2^{-m+1}}^{-1+(k+1)2^{-m+1}}(\gamma_{z},\gamma_{z})_{\frac{r}{(1-\varepsilon)\lambda_{i,\min}}} \, dz \nonumber \\
&\leq\sum_{i=1}^lp_i^2\frac{(1+\varepsilon)\lambda_{i,\max}}{((1-\varepsilon)\lambda_{i,\min})^2}\frac{1}{\left(\dfrac{r}{(1-\varepsilon)\lambda_{i,\min}}\right)^2}\int_{-1}^1(\gamma_{z},\gamma_{z})_{\frac{r}{(1-\varepsilon)\lambda_{i,\min}}} \, dz \nonumber \\
&\leq\max_{i}J\left(\frac{r}{\lambda_{i,\min}(1-\varepsilon)}\right)\sum_{i=1}^lp_i^2\frac{(1+\varepsilon)\lambda_{i,\max}}{((1-\varepsilon)\lambda_{i,\min})^2}. \label{e11}
\end{align}
For the second part of the sum, we use (\ref{e10}), to prove that it is bounded by
\begin{multline}\label{e12}
\frac{1}{2^{2m}r^2}\sum_{i\neq j}p_ip_j\sum_{a\in A_i}\sum_{b\in
A_j}\int_{-1}^1(\gamma_{g_{\varepsilon,m}^{-a}(z)}\circ
g_{\varepsilon, m}^{-a},\gamma_{g_{\varepsilon,m}^{-b}(z)}\circ
g_{\varepsilon, m}^{-b})_r \, dz\\
\leq \frac{1}{2^{2m}r^2}\sum_{i\neq j}p_ip_j\sum_{a\in
A_i}\sum_{b\in
A_j}\frac{8r^2}{C_{\varepsilon,m}\varepsilon}\leq\frac{8}{C_{\varepsilon,m}\varepsilon}.
\end{multline}
By using (\ref{e11}) and (\ref{e12}) we have
\begin{equation}\label{e13}
J(r)\leq\frac{8}{C_{\varepsilon,m}\varepsilon}+b\max_{i}J\left(\frac{r}{\lambda_{i,\min}(1-\varepsilon)}\right)
\end{equation}
where
$b=\sum_{i=1}^lp_i^2\frac{(1+\varepsilon)\lambda_{i,\max}}{((1-\varepsilon)\lambda_{i,\min})^2}$
is less than $1$ by \eqref{conditionoft3}. for sufficiently small
$\varepsilon>0$. We define a strictly monotone decreasing series
$r_k$. Let $r_0<1/2$ be fixed and
$r_k=r_0(1-\varepsilon)^k\prod_{n=1}^k(\lambda_{i_n,\min})$ such
that
\[
\max_{i}J\left(\frac{r_k}{(1-\varepsilon)\lambda_{i,\min}}\right)=J(r_{k-1}).
\]
We note that $r_k$ is a well defined series. Then by induction and
by using (\ref{e13}), we have
\begin{equation}\label{e15}
J(r_k)\leq\frac{8}{C_{\varepsilon,m}\varepsilon}\frac{1-b^{k}}{1-b}+b^kJ(r_0)
\end{equation}
for every $k\geq1$. Hence by (\ref{e14}), (\ref{e5}) and
(\ref{e15}) we get
\begin{multline}
\|\nu_{\varepsilon,
m}\|_2^2\leq4\liminf_{r\rightarrow0}J(r)\leq4\liminf_{k\rightarrow\infty}J(r_k)
\\ \leq \frac{32}{C_{\varepsilon,m} \varepsilon} \frac{1}{1 -
\sum_{i=1}^l p_i^2 \frac{(1 + \varepsilon) \lambda_{i,\max}}{((1 -
\varepsilon) \lambda_{i, \min})^2}}.
\end{multline}
Since $\nu_{\varepsilon,m}$ converges weakly to $\nu_\varepsilon$ we get that
\begin{equation}
\|\nu_{\varepsilon}\|_2\leq\frac{1}{\sqrt{\varepsilon}}C_{\varepsilon}'
\end{equation}
where
\[
C_{\varepsilon}'=\sqrt{\frac{32}{\left(1-\sum_{i=1}^lp_i^2\frac{(1+\varepsilon)\lambda_{i,\max}}{((1-\varepsilon)\lambda_{i,\min})^2}\right)C_{\varepsilon}''}}
\]
and
\[
C_{\varepsilon}'' = \lim_{m \to \infty} C_{\varepsilon, m} = \min_{i \neq j}
\left\{\frac{|a_i - a_j| + \varepsilon(-|a_i + a_j| - 2)}{1 - \varepsilon^2} \right\}.
\]
\end{proof}

\section{Proof of Theorem \ref{t5}}

We do not notify the proof of Theorem \ref{t5}, because it is
similar to the proof of Theorem \ref{t3}. We notify only the
modification of Lemma \ref{l1}, which is important as it proves
transversality.

First we define a new dynamical system. Let $Q_{i,k}$ and
$A_{i,k}$ be as in Section~\ref{sec:firstproof}. Let
$\widetilde{g}_{\varepsilon,m} \colon Q \to Q$ be defined by
\[
  \widetilde{g}_{\varepsilon,m} \colon  (x,y,z) \mapsto \left( \widetilde{d}(z) x + a_i (1 - \widetilde{d} (z)),\ \frac{1}{p_i} y + b (y),\ 2^m z + c (z) \right),
\]
for $(x, y, z) \in Q_i$, where
\begin{align*}
  \widetilde{d}(z) &= \lambda_i + 2^m \varepsilon (z - ( - 1 + (k + \frac{1}{2}) 2^{-m+1})),
  &\mathrm{for}\ (x,y,z) \in Q_{i,k}, \\
  b(y) &= 1 - \frac{1}{p_i} \left( - 1 + 2 \sum_{j=1}^i p_j \right),
  &\mathrm{for}\ (x,y,z) \in Q_{i,k}, \\
  c(z) &= 2^m - 2k - 1, &\mathrm{for}\ (x,y,z) \in Q_{i,k}.
\end{align*}

\begin{lemma}\label{l2}
Let us suppose that (\ref{a1}) holds. Let
\[
 C_p = \left\{\, (u,v,w) \in T_p Q : \Bigl| \frac{u}{w} \Bigr|, \Bigl| \frac{v}{w} \Bigr| < \frac{2^{m+1} \varepsilon}{2^m - \lambda_{\max} - \varepsilon} \,\right\},
\]
where $p \in Q$ and $\lambda_{\max} = \max_i \lambda_{i}$. The cones
$C_p$ defines a family of unstable cones, that is
$d_p \widetilde{g}_{\varepsilon,m} (C_p) \subset
C_{\widetilde{g}_{\varepsilon,m} (p)}$.

Moreover, for sufficiently large $m$ and every sufficiently small
$0<\varepsilon$, if $\zeta_1\subset Q_{\xi_1}$ and $\zeta_2\subset
Q_{\xi_2}$ are two line segments with tangents in $C_p$ such that
$\xi_1\in A_i$ and $\xi_2\in A_j$, $i\neq j$, then if
$\widetilde{g}_{\varepsilon,m}(\zeta_1)$ and
$\widetilde{g}_{\varepsilon,m}(\zeta_2)$ intersects, and if
$(u_1,v_1,1)$ and $(u_2,v_2,1)$ are tangents to
$\widetilde{g}_{\varepsilon,m}(\zeta_1)$ and
$\widetilde{g}_{\varepsilon,m}(\zeta_2)$ respectively, there
exists a constant $C_{\varepsilon,m}$, depending on $\varepsilon$ and $m$, but bounded away from $0$ and infinity, such that
$|u_1-u_2|>C_{\varepsilon,m}\varepsilon$.
\end{lemma}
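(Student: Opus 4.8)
**The plan is to mirror the proof of Lemma \ref{l1}, carrying out two tasks: verifying the cone condition via the explicit Jacobian of $\widetilde g_{\varepsilon,m}$, and establishing the transversality estimate on the images of intersecting curve segments coming from different cylinders.**

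For the cone invariance, I would first compute
\[
  d_p \widetilde g_{\varepsilon,m} = \left( \begin{array}{ccc}
    \widetilde d(z) & 0 & 2^m\varepsilon(x - a_i) \\
    0 & \tfrac{1}{p_i} & 0 \\
    0 & 0 & 2^m
  \end{array}\right),
\]
for $p = (x,y,z)\in Q_{i,k}$. Applying this to $(u,v,w)\in C_p$ gives the image vector $(\widetilde d(z)u + 2^m\varepsilon(x-a_i)w,\ \tfrac1{p_i}v,\ 2^m w)$. Exactly as in Lemma \ref{l1}, I estimate $\bigl|\widetilde d(z)u + 2^m\varepsilon(x-a_i)w\bigr|/|2^m w| \leq \tfrac{\lambda_{\max}+\varepsilon}{2^m}\cdot\tfrac{|u|}{|w|} + 2\varepsilon$; using $|u/w| < \tfrac{2^{m+1}\varepsilon}{2^m - \lambda_{\max} - \varepsilon}$ and the bound $|x - a_i| \leq 2$, a short computation shows this stays below $\tfrac{2^{m+1}\varepsilon}{2^m - \lambda_{\max}-\varepsilon}$ once $m$ is large enough that $2^m - \lambda_{\max} - \varepsilon > 0$; the second ratio $|\tfrac1{p_i}v|/|2^m w|$ is handled identically, using $p_i 2^m > 1$. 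Hence $d_p\widetilde g_{\varepsilon,m}(C_p) \subset C_{\widetilde g_{\varepsilon,m}(p)}$.

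For transversality, suppose $p = (x_p,y_p,z_p)\in Q_i$ and $q = (x_q,y_q,z_q)\in Q_j$ with $i\neq j$ satisfy $\widetilde g_{\varepsilon,m}(p) = \widetilde g_{\varepsilon,m}(q) = (x,y,z)$. Then $x = \widetilde d(z_p)x_p + a_i(1 - \widetilde d(z_p))$, so $x_p = \tfrac{x - a_i(1 - \widetilde d(z_p))}{\widetilde d(z_p)}$, and similarly for $x_q$. The image of the cone under $d_p\widetilde g_{\varepsilon,m}$, after normalising the third coordinate to $1$, is a set of vectors $(u,v,1)$ with $u = \widetilde d(z_p)u'/2^m + (x_p - a_i)\varepsilon$ for $|u'| < \tfrac{2^{m+1}\varepsilon}{2^m-\lambda_{\max}-\varepsilon}$, so $u$ lies in an interval centred at $(x_p - a_i)\varepsilon$ of half-width $\tfrac{2(\lambda_{\max}+\varepsilon)\varepsilon}{2^m - \lambda_{\max}-\varepsilon}$. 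The key quantity is therefore $\bigl|(x_p - a_i) - (x_q - a_j)\bigr|\varepsilon$. Writing $x_p - a_i = \tfrac{x - a_i}{\widetilde d(z_p)}$ and $x_q - a_j = \tfrac{x - a_j}{\widetilde d(z_q)}$ with $\widetilde d(z_p), \widetilde d(z_q) \in [\lambda_i - \varepsilon, \lambda_i + \varepsilon]$ resp. $[\lambda_j - \varepsilon, \lambda_j+\varepsilon]$, I would lower-bound $\bigl|\tfrac{x-a_i}{\widetilde d(z_p)} - \tfrac{x-a_j}{\widetilde d(z_q)}\bigr|$. At $\varepsilon = 0$ this is $\bigl|\tfrac{x-a_i}{\lambda_i} - \tfrac{x-a_j}{\lambda_j}\bigr| = \tfrac{|(x-a_i)\lambda_j - (x-a_j)\lambda_i|}{\lambda_i\lambda_j} = \tfrac{|x(\lambda_j - \lambda_i) - (a_i\lambda_j - a_j\lambda_i)|}{\lambda_i\lambda_j}$; this is an affine function of $x$ with slope $\tfrac{|\lambda_i - \lambda_j|}{\lambda_i\lambda_j}$ vanishing at $x = \tfrac{a_i\lambda_j - a_j\lambda_i}{\lambda_j - \lambda_i}$, which by assumption \eqref{a1} has absolute value $> 1$, hence lies outside $[-1,1]$. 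Therefore on $x\in[-1,1]$ this affine function is bounded below by $\tfrac{|a_i\lambda_j - a_j\lambda_i| - |\lambda_i - \lambda_j|}{\lambda_i\lambda_j} > 0$, which is a positive constant independent of $m$ and $\varepsilon$. Perturbing in $\varepsilon$ (the denominators move by $O(\varepsilon)$, the interval half-widths are $O(2^{-m})$) and absorbing these corrections, I obtain $|u_1 - u_2| \geq C_{\varepsilon,m}\varepsilon$ with
\[
  C_{\varepsilon,m} = \sigma \min_{i\neq j}\frac{|a_i\lambda_j - a_j\lambda_i| - |\lambda_i - \lambda_j|}{\lambda_i\lambda_j} - \frac{4(\lambda_{\max}+\varepsilon)}{2^m - \lambda_{\max}-\varepsilon}
\]
for some $0 < \sigma < 1$, which for $m$ large and $\varepsilon$ small is bounded away from $0$ and $\infty$, as required.

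**The main obstacle is the transversality bound: isolating exactly why \eqref{a1} forces the relevant affine function to stay positive on $[-1,1]$ and tracking the $\varepsilon$- and $2^{-m}$-order error terms cleanly enough that the resulting $C_{\varepsilon,m}$ is manifestly bounded away from $0$.** Once the $\varepsilon = 0$ computation is pinned down — that the zero of the affine map lies outside $[-1,1]$ precisely by the hypothesis — the perturbative corrections are routine, and the rest of the proof of Theorem \ref{t5} then proceeds verbatim as in Section~\ref{sec:firstproof}, with $(1+\varepsilon)\lambda_{i,\max}$ replaced by $\lambda_i + \varepsilon$ and $(1-\varepsilon)\lambda_{i,\min}$ by $\lambda_i - \varepsilon$ in the estimates \eqref{e9}--\eqref{e15}.
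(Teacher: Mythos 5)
Your proof follows the paper's argument essentially verbatim: the same Jacobian computation, the same cone-invariance estimates, the same reduction of the transversality bound to lower-bounding $\bigl|\tfrac{x-a_i}{\widetilde d(z_p)} - \tfrac{x-a_j}{\widetilde d(z_q)}\bigr|$, and the same use of \eqref{a1}. The only cosmetic difference is that you phrase the key lower bound as the root of an affine function lying outside $[-1,1]$, whereas the paper applies the reverse triangle inequality directly to the numerator — these are equivalent and yield the identical constant $\min_{i\ne j}\tfrac{|a_i\lambda_j - a_j\lambda_i| - |\lambda_i-\lambda_j|}{\lambda_i\lambda_j}$.
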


\begin{proof}[Proof of Lemma \ref{l2}]
The Jacobian of $\widetilde{g}_{\varepsilon,m}$
\[d_p\widetilde{g}_{\varepsilon,m}=\left(%
\begin{array}{ccc}
  \widetilde{d}(z) & 0 & 2^m\varepsilon(x-a_i) \\
  0 & \frac{1}{p_i} & 0 \\
  0 & 0 & 2^m \\
\end{array}%
\right),
\]
where $p=(x,y,z)\in Q_{i,k}$. If $(u,v,w) \in C_p$, then
\[
d_p\widetilde{g}_{\varepsilon,m}(u,v,w)=\left(%
\begin{array}{c}
  \widetilde{d}(z)u+2^m\varepsilon(x-a_i)w \\
  \frac{1}{p_i}v \\
  2^mw \\
\end{array}%
\right).
\]
The estimate
\begin{multline*}
\frac{|\widetilde{d}(z) u + 2^m \varepsilon (x - a_i) w|}{|2^m w|} \leq \frac{\widetilde{d} (z) |u|}{2^m|w|} + 2 \varepsilon \\ \leq \frac{\lambda_i + \varepsilon}{2^m} \frac{2^{m+1} \varepsilon}{2^m - \lambda_{\max} - \varepsilon} + 2\varepsilon \leq \frac{2^{m+1} \varepsilon}{2^m - \lambda_{\max} - \varepsilon}
\end{multline*}
shows that $d_p \widetilde{g}_{\varepsilon, m} (C_p) \subset C_{ \widetilde{g}_{\varepsilon, m} (p)}$.
Now we prove the other statement of the Lemma. Assume that
$p=(x_p,y_p,z_p)\in Q_i$ and $q=(x_q,y_q,z_q)\in Q_j$, $i\neq j$,
are such that
$\widetilde{g}_{\varepsilon,m}(p)=\widetilde{g}_{\varepsilon,m}(q)=(x,y,z)$.
Then
\[
p\in
Q_i\;\;\;\Rightarrow\;\;\;d_p\widetilde{g}_{\varepsilon,m}:(u,v,1)\mapsto2^m\left(\frac{\widetilde{d}(z_p)}{2^m}u+(x_p-a_i)\varepsilon,\ \frac{v}{p_i},\ 1\right).
\]
Then
\[
x_p=\frac{x - a_i (1 - \widetilde{d} (z_p))}{\widetilde{d} (z_p)}, \quad x_q = \frac{x-a_j(1 - \widetilde{d} (z_q))}{\widetilde{d} (z_q)}
\]
and
\begin{multline*}
  d_p\widetilde{g}_{\varepsilon,m}(C_p)
  \subset \biggl\{\, w(u,v,1):
  \frac{x - a_i}{\widetilde{d} (z_p)} \varepsilon - \frac{2 (\lambda_{i} + \varepsilon) \varepsilon}{2^m - \lambda_{\max} - \varepsilon} \\ \leq
  u \leq \frac{x - a_i}{\widetilde{d} (z_p)} \varepsilon + \frac{2(\lambda_i + \varepsilon) \varepsilon}{2^m - \lambda_{\max} - \varepsilon}
  \, \biggr\}.
\end{multline*}
Therefore
\[
  |u_2 - u_1| \geq \left( \left| \frac{x - a_i}{\widetilde{d} (z_p)} - \frac{x  - a_j}{\widetilde{d} (z_q)}\right| - \frac{2 (\lambda_{i} + \lambda_j + 2 \varepsilon)}{2^m - \lambda_{\max} - \varepsilon}\right) \varepsilon.
\]
The term
\[
  \left| \frac{x - a_i}{\widetilde{d} (z_p)} - \frac{x  - a_j}{\widetilde{d} (z_q)}\right|
\]
can be estimated by
\[
  \left| \frac{x - a_i}{\widetilde{d} (z_p)} - \frac{x  - a_j}{\widetilde{d} (z_q)}\right| \geq \left| \frac{ |\widetilde{d} (z_p) - \widetilde{d} (z_q) | |x| - | a_j \widetilde{d} (z_p) - a_i \widetilde{d} (z_q) | }{ \widetilde{d} (z_p) \widetilde{d} (z_q) } \right|.
\]
Hence, this term is positive provided that
\[
  | a_j \widetilde{d} (z_p) - a_i \widetilde{d} (z_q) | > |\widetilde{d} (z_p) - \widetilde{d} (z_q) |.
\]
Since
$\lambda_i - \varepsilon \leq \widetilde{d}(z_p) \leq \lambda_i + \varepsilon$
and
$\lambda_j-\varepsilon \leq \widetilde{d}(z_q)\leq\lambda_j+\varepsilon$,
this is implied by \eqref{a1} if $\varepsilon$ is sufficiently small.

If we let
\[
  C_{\varepsilon, m} = \frac{1}{2} \min_{i \ne j} \frac{ | a_i \lambda_j - a_j \lambda_i | - | \lambda_i - \lambda_j | }{\lambda_i \lambda_j},
\]
then
\[
  |u_2 - u_1| \geq C_{\varepsilon, m} \varepsilon,
\]
provided that $\varepsilon$ is small and $m$ large.

In fact we can let
\[
  C_{\varepsilon, m} = \sigma \min_{i \ne j} \frac{ | a_i \lambda_j - a_j \lambda_i | - | \lambda_i - \lambda_j | }{\lambda_i \lambda_j},
\]
for $0 < \sigma < 1$.
\end{proof}




\end{document}